\theoremstyle{plain}
\newtheorem{theorem}{Theorem}[section]
\newtheorem{lemma}[theorem]{Lemma}
\theoremstyle{definition}
\newtheorem{definition}[theorem]{Definition}
\newtheorem{remark}[theorem]{Remark}
\newtheorem{remarks}[theorem]{Remarks}
\DeclareMathOperator{\area}{area}
\DeclareMathOperator{\inte}{int}
\DeclareMathOperator{\mecd}{mecd}
\newcommand{\BN}{\mathbb N}
\newcommand{\mG}{\mathcal{G}}
\newcommand{\mH}{\mathcal{H}}
\theoremstyle{definition}
\def\fnum{equation}
\numberwithin{equation}{section}
\newcommand{\diam}{{\text {diam}}}
\begin{document}

\title{On the classification of Heegaard Splittings}

\author{Tobias Holck Colding}\address{Department of Mathematics\\ MIT\\Cambridge, MA 02139-4307}

\author{David Gabai}\address{Department of Mathematics\\Princeton
University\\Princeton, NJ 08544}

\author{Daniel Ketover}


\thanks{September 19, 2015\\The first author
was partially supported by NSF Grant DM  
1404540 and NSF FRG grant DMS 
 0854774, the second by DMS-1006553 and NSF FRG grant DMS-0854969 and the third by an NSF Postdoctoral Fellowship.}
 
 \email{colding@math.mit.edu, gabai@math.princeton.edu and dketover@math.princeton.edu}

\begin{abstract} The long standing classification problem in the theory of Heegaard splittings of 3-manifolds is to exhibit for each closed 3-manifold a complete list, without duplication, of all its irreducible Heegaard surfaces, up to isotopy.  We solve this problem for non Haken hyperbolic 3-manifolds.
  \end{abstract}

\maketitle

\setcounter{section}{-1}

\section{Introduction}\label{S0}  The main result of this paper is 

\begin{theorem}  \label{main} Let $N$ be a closed  non Haken hyperbolic 3-manifold.  There exists a constructible set $S_0, S_1, \cdots, S_n$ such that if $S$ is an irreducible Heegaard splitting, then $S$ is isotopic to exactly one $S_i$.\end{theorem}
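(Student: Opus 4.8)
The plan is to split the classification into three algorithmic pieces: a bound on the genus of irreducible Heegaard surfaces, an effective enumeration of all candidates of bounded genus, and an algorithm that decides isotopy among them. The starting point is a purely topological reduction. Since $N$ is non Haken it contains no closed incompressible surface of positive genus, and being hyperbolic it is irreducible with Heegaard genus at least two; hence by the theorem of Casson--Gordon every weakly reducible Heegaard splitting of $N$ is reducible. Equivalently, the splittings in question having genus at least two, every \emph{irreducible} Heegaard surface of $N$ is \emph{strongly irreducible}, so it suffices to classify strongly irreducible Heegaard surfaces up to isotopy.

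\textbf{Stage 1: a constructible genus bound.} By the min--max theory of Pitts--Rubinstein -- realized through sweepouts, with mean curvature flow used to control the geometry along the way -- a strongly irreducible genus $g$ Heegaard surface is isotopic in $N$ to a closed embedded minimal surface $\Sigma$ of genus $g$ and Morse index at most one; the degenerate alternatives of that theory (configurations built from stable, incompressible, or one-sided minimal pieces) are excluded here since $N$ is hyperbolic and non Haken. Suppose, toward a contradiction, that irreducible Heegaard surfaces of arbitrarily large genus existed. Then we would obtain closed embedded minimal surfaces $\Sigma_i\subset N$ with index at most one and $\genus(\Sigma_i)\to\infty$. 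Away from a bounded number of points each $\Sigma_i$ is stable, so Schoen-type curvature estimates give uniform local area and curvature bounds there; after passing to a subsequence the $\Sigma_i$ converge -- smoothly, and with finite multiplicity, away from finitely many points -- to a minimal lamination $\mathcal{L}$ of $N$. Here is where non Hakenness is indispensable: one argues that in a hyperbolic non Haken 3-manifold $\mathcal{L}$ can only be a finite union of closed, necessarily compressible, minimal leaves, and then a quantitative analysis of the convergence -- bounding the number of sheets over each leaf together with the genus created in the finitely many ``neck'' regions -- produces an explicit bound $\genus(\Sigma_i)\le g_0(N)$, contradicting $\genus(\Sigma_i)\to\infty$. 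The resulting constant $g_0(N)$ is extracted from a triangulation of $N$ together with data for its hyperbolic structure. I expect that carrying out this limiting analysis effectively -- in particular ruling out, quantitatively, any concentration of genus in the limit -- will be the main obstacle in the proof.

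\textbf{Stage 2: a constructible list of candidates.} Fix a triangulation $\tau$ of $N$. By Rubinstein's almost normal surface theory every strongly irreducible Heegaard surface of genus at most $g_0(N)$ is isotopic to an almost normal surface with respect to $\tau$, and there are only finitely many such surfaces, all enumerable by solving finitely many linear systems over $\mathbb{Z}$. From this finite list one deletes, algorithmically, every surface whose two complementary pieces are not both handlebodies (handlebody recognition is decidable) and every Heegaard surface that is reducible, that is, stabilized (recognizing stabilizations is decidable). What remains is a finite, constructible list $T_1,\dots,T_m$ of strongly irreducible -- equivalently irreducible -- Heegaard surfaces which, by Stage 1, contains a representative of every isotopy class of irreducible Heegaard surface of $N$.

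\textbf{Stage 3: deciding isotopy.} It remains to decide, for $T_i$ and $T_j$ of the same genus, whether they are ambient isotopic, and then to keep one $T_k$ per isotopy class; this produces the required list $S_0,\dots,S_n$ in which every irreducible Heegaard surface is isotopic to exactly one member. Here the rigidity of hyperbolic 3-manifolds enters: by Mostow rigidity $\Isom(N)$ is finite and effectively computable from $\tau$, and by the hyperbolic case of the Smale conjecture the inclusion $\Isom(N)\hookrightarrow\Diff(N)$ is a homotopy equivalence, so $\Diff_0(N)$ is contractible and $\pi_0\Diff(N)\cong\Isom(N)$. Consequently $T_i$ and $T_j$ are isotopic precisely when the combinatorial Heegaard data they determine -- the pair of handlebodies together with the gluing, recorded as a Heegaard diagram -- coincide after applying one of the finitely many mapping classes represented by the isometries of $N$; since the genus is fixed, no stabilization moves enter, so this is a finite check. (Alternatively one may invoke the algorithmic solution of the homeomorphism and isotopy problems for the pieces obtained by cutting $N$ along the $T_k$, using finiteness of $\pi_0\Diff(N)$ to bound the search.) This completes the construction of $S_0,\dots,S_n$ and the proof.
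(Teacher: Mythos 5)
The crux of this theorem, and of the paper, is the decision problem in your Stage~3, and your argument there has a genuine gap.  Knowing that $\pi_0\Diff(N)\cong\Isom(N)$ is finite (Mostow rigidity plus the hyperbolic Smale conjecture) does not by itself yield a finite procedure to decide whether two embedded Heegaard surfaces $T_i,T_j\subset N$ are ambient isotopic.  Your proposal to compare ``the combinatorial Heegaard data they determine \ldots\ after applying one of the finitely many mapping classes'' is not a well-posed finite check: a Heegaard splitting has no canonical Heegaard diagram, but rather infinitely many related by handle slides, and reducing the extrinsic isotopy problem for surfaces in $N$ to a comparison of diagrams modulo a finite group requires exactly the kind of quantitative control you have not supplied.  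The parenthetical alternative --- solving ``the homeomorphism and isotopy problems for the pieces obtained by cutting $N$ along the $T_k$'' --- is also off target: the pieces are handlebodies, whose intrinsic classification is trivial and does not bear on how $T_i$ sits inside $N$.  A similar unjustified step occurs in Stage~2, where you assert without argument that recognizing stabilized (reducible) splittings is decidable; in the non-Haken hyperbolic setting this is itself a nontrivial problem.

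The paper's actual proof supplies precisely the missing ingredient.  Its central technical result is the Thick Isotopy Lemma (Lemma~\ref{thick isotopy}), proved via a two-parameter Almgren--Pitts/Simon--Smith min-max sweepout together with Ketover's genus bounds: if two strongly irreducible Heegaard surfaces are isotopic, then they are connected by an isotopy through surfaces with \emph{computably} bounded area and a uniform lower bound on the diametric soul (diameter of the shortest essential compressing curve), and a weakly reducible surface is similarly connected to an \emph{obviously} bi-compressible one.  This quantitative control allows the isotopy to be discretized: after approximating by crudely (almost) normal surfaces with respect to a fine triangulation (Lemmas~\ref{coarea} and~\ref{crude isotopy}), all intermediate surfaces lie in a finite, constructible set, and the isotopy becomes a path in a finite graph $\mG$ whose vertices are normal isotopy classes and whose edges are elementary moves and pinches.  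Both the duplication check (Lemma~\ref{duplicate weeding}) and the reducibility check (Lemma~\ref{reducible weeding}) then reduce to reachability in $\mG$, which is decidable.  Your Stages~1 and~2 correspond to the paper's citations of \cite{CG} for the genus bound and \cite{Li3} for the finite candidate list, but without an effective isotopy-control result of the Thick Isotopy Lemma type, your Stage~3 does not close and the theorem is not proved.
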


\begin{remarks}  Given $g\in \BN$ Tao Li \cite{Li3} shows how to construct a finite list of genus-$g$ Heegaard surfaces such that, up to isotopy, every genus-$g$ Heegaard surface appears in that list.  By \cite{CG}  there exists a computable $C(N)$ such that one need only consider $g\le C(N)$, hence there exists a constructible set of Heegaard surfaces that contains every irreducible Heegaard surface.  However, this list may contain reducible splittings and duplications.  The main goal of this paper is to give an algorithm that weeds out the duplications and reducible splittings.     \end{remarks}

\noindent\emph{Idea of Proof.}  We first prove the \emph{Thick Isotopy Lemma} which implies that if $S_i$ is isotopic to $S_j$, then there exists a smooth isotopy by surfaces of area uniformly bounded above and diametric soul uniformly bounded below.  (The \emph{diametric soul} of a surface $T\subset N$ is the infimal diameter in $N$ of the essential closed curves in $T$.)  The proof of this lemma uses a 2-parameter sweepout argument that may be of independent interest.  We construct a graph $\mG$ whose vertices comprise a finite net in the set of genus $\le C(N)$ embedded surfaces of uniformly bounded area and diametric soul, i.e. up to small perturbations and pinching  spheres any such surface is close to a vertex of $\mG$. The edges of $\mG$ connect vertices that are perturbations of each other up to pinching necks.  Thus $S_i$ and $S_j$ are isotopic if and only if they lie in the same component of $\mG$.  For technical reasons the construction of $\mG$ is carried out in the PL category. 

The Thick Isotopy Lemma also shows that any reducible Heegaard surface $S_i$ is isotopic to an \emph{obviously} reducible one through surfaces of uniformly bounded area and diametric souls.  Thus $S_i$ is reducible if and only if it lies in the same component of $\mG$ as an \emph{obviously} reducible one. A surface is obviously reducible if it has small diameter essential curves that compress to each side.  The point here is that the surfaces in the isotopy look incompressible at a  small scale, but at a somewhat larger scale the isotopy ends at an obviously reducible surface.

Our main result for manifolds with taut ideal triangulations was earlier obtained by Jesse Johnson \cite{Jo}.  His methods inspired some of  those used in this paper.

This paper is organized as follows.  Basic definitions and facts are given in  \S1,  the Thick Isotopy Lemma is proved in \S2 and the graph $\mG$ is constructed in \S3.

\vskip 8pt
\begin{remark}  
We believe that the methods of this paper will have applications to the  classification problem for compact 3-manifolds and may also have application to the question of finding the minimal common stabilization to two splittings.  \end{remark}




\section{Heegaard splittings and Paths of Heegaard foliations}


\begin{definition}  A \emph{Heegaard splitting} of a closed orientable 3-manifold $M$ consists of an ordered pair  $(H_0, H_1)$ of handlebodies whose union is $M$ and whose intersection is their boundaries.  This common boundary $S$ is called a \emph{Heegaard surface}.  Two Heegaard splittings $(H_0, H_1)$, $(H'_0, H'_1)$ are \emph{isotopic} if there exists an ambient isotopy of $M$ taking $H_0$ to $H'_0$.\end{definition}

\begin{remark}  The Heegaard splitting  $(H_0, H_1)$ may not be isotopic to $(H_1, H_0)$, e.g. see \cite{Bir}. The ordering induces  a transverse orientation on the Heegaard surface $S$, pointing from $H_0$ to $H_1$ and isotopy is required to preserve it.   All the results of this paper naturally carry over to the weaker setting where isotopy of Heegaard surfaces need not preserve the transverse orientation.\end{remark}

\begin{definition}  A \emph{Heegaard foliation} $\mH$ is a singular foliation of $M$ induced by a submersion to $[0,1]$ such that a Heegaard surface $S$ is a leaf, and if $H_0$ and $H_1$ are the handlebodies bounded by $S$ then each $H_i$ has a PL spine $E_i$ such that $\mH|H_i\setminus E_i$ is a fibration by surfaces that limit to $E_i$.\end{definition}

\begin{lemma}\label{pathoffoliations}  i)  Any two Heegaard foliations $\mH, \mH'$ of the same Heegaard splitting $(H_0, H_1)$ which have the same spines are isotopic, via isotopies fixing $S$ and the spines pointwise.

ii)  Given $\mH_0, \mH_1$ two Heegaard foliations of the same Heegaard splitting $(H_0, H_1)$ there exists a path $\mH_t$ of Heegaard foliations from $\mH_0$ to $\mH_1$ which varies smoothly away from the spines and fixes the Heegaard surface $S$ throughout. 

iii)  Given isotopic Heegaard splittings $(H_0, H_1)$ and $(H'_0, H'_1)$ with Heegaard foliations $\mH_0, \mH_1$ and Heegaard surfaces $S_0, S_1$, there exists a path $\mH_t $ of Heegaard foliations from $\mH_0$ to $\mH_1 $ which varies smoothly away from the spines and takes the Heegaard surface $S_0$ to $S_1$.\end{lemma}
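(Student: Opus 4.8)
The plan is to build all three statements from a single deformation tool for fibrations on handlebody-minus-spine, then bootstrap (i) $\Rightarrow$ (ii) $\Rightarrow$ (iii). For (i), fix the Heegaard surface $S$ and a spine $E_i$ in each $H_i$. A Heegaard foliation restricted to $H_i \setminus E_i$ is a fibration over a half-open interval whose leaves limit onto $E_i$; any two such fibrations with the same boundary leaf $S$ and the same spine are determined, up to the desired isotopy, by a defining submersion $H_i \setminus E_i \to (0,1]$. I would first isotope so that the two foliations agree on a collar $S \times [0,\epsilon]$ of $S$ and on a small regular neighborhood $N(E_i)$ of the spine (where both are standard ``mapping-cylinder'' foliations of $N(E_i)\setminus E_i$ — this uses that $E_i$ is PL, so it has a cone neighborhood and any two spine-fixing foliations near $E_i$ are standard after an isotopy fixing $E_i$). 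On the compact region in between, which is a product $S \times [\epsilon, 1-\epsilon]$, two fibrations by copies of $S$ rel $\partial$ differ by an element of $\pi_0$ of the space of such fibrations, which is contractible since it deformation retracts onto the linear foliation; this gives the isotopy fixing $S$ and the spines pointwise. Performing this in each $H_i$ and matching along $S$ yields (i).

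For (ii), the two foliations $\mH_0,\mH_1$ have possibly different spines $E_i^0, E_i^1$ in the same handlebody $H_i$. Since any two PL spines of a handlebody are related by a finite sequence of elementary expansions and collapses (or, more concretely, both are PL-isotopic after subdivision to a standard spine), and each such elementary move can be realized by a path of Heegaard foliations fixing $S$ — an expansion just reparametrizes the fibration near the spine, pushing leaves slightly and absorbing an arc into the spine — I would connect $E_i^0$ to $E_i^1$ through a path of spines, carry the foliation along this path, and then apply (i) at the endpoint to match $\mH_1$ exactly. Concatenating gives the smooth-away-from-spines path fixing $S$ throughout.

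For (iii), let $\phi_t\colon M\to M$ be an ambient isotopy with $\phi_0 = \id$ and $\phi_1(H_0) = H_0'$; then $\phi_1(S_0)$ is isotopic to $S_1$ as Heegaard surfaces of $(H_0',H_1')$, and $(\phi_t)_*\mH_0$ is a path of Heegaard foliations from $\mH_0$ to a Heegaard foliation $\mH_0'$ of $(H_0',H_1')$ with Heegaard surface $\phi_1(S_0)$, varying smoothly away from the (moving) spines and carrying $S_0$ to $\phi_1(S_0)$. Now $\mH_0'$ and $\mH_1$ are two Heegaard foliations of the \emph{same} splitting $(H_0',H_1')$, but with Heegaard surfaces $\phi_1(S_0)$ and $S_1$ that are isotopic within that splitting; an isotopy of $M$ preserving $(H_0',H_1')$ and taking $\phi_1(S_0)$ to $S_1$ (which exists, e.g.\ by pushing along a collar) converts $\mH_0'$ to a foliation with Heegaard surface $S_1$, after which part (ii) connects it to $\mH_1$ rel $S_1$. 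The concatenation of these three paths proves (iii).

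I expect the main obstacle to be the rigorous treatment in (ii) of moving the spine: one must verify that an arbitrary change of PL spine inside a fixed handlebody $H_i$, with $S = \partial H_i$ fixed, can be effected by a genuine path of \emph{Heegaard foliations} (not merely of fibrations on the open handlebody), i.e.\ that each elementary spine move extends to a one-parameter family of submersions $H_i \to [0,1]$ of the required type whose leaves still limit correctly onto the moving spine. Handling the limiting behavior near the spine uniformly along the path — keeping the ``fibration that limits to $E_i$'' condition intact through collapses, where the spine changes dimension locally — is the delicate point; I would manage it by working in a regular neighborhood of the spine where everything is the standard cone/mapping-cylinder picture and doing the move there, leaving the foliation unchanged outside a small neighborhood.
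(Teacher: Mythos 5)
Your bootstrap (i) $\Rightarrow$ (ii) $\Rightarrow$ (iii) matches the paper, and your arguments for (ii) and (iii) are in essence the paper's (change the spine through a one-parameter family of slides, then push forward by an ambient isotopy and reduce to (ii); your extra ``collar push'' in (iii) is actually superfluous, since any ambient isotopy taking $H_0$ onto $H'_0$ automatically takes $S_0=\partial H_0$ onto $S_1=\partial H'_0$). The genuine gap is in (i). You propose first to isotope so the two foliations agree on a collar of $S$ and on a regular neighborhood of the spine, leaving a compact product region where the two fibrations are to be connected by a connectivity claim. Both of the facts you invoke there are the actual content of the lemma. The assertion that ``any two spine-fixing foliations near $E_i$ are standard after an isotopy fixing $E_i$'' is precisely where the limiting behavior of infinitely many pinching leaves lives; the cone structure of a PL graph does not hand this to you, and in effect it \emph{is} statement (i) restricted to a neighborhood of the spine, so as stated your argument is close to circular. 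And the assertion that the space of fibrations of $S\times I$ rel boundary ``deformation retracts onto the linear foliation'' is not elementary either; the relevant input is Laudenbach--Blank \cite{LB}, which the paper invokes explicitly.

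The paper's proof of (i) is built to handle both points concretely. It chooses nested sequences of leaves $S_k$, $S'_k$ of $\mH$, $\mH'$ converging to the spine $E_0$, isotopes one sequence onto the other so that the induced map $\cup S_k\cup E_0\to\cup S'_k\cup E_0$ is continuous, and then applies \cite{LB} to each compact product region $V_k$ bounded by consecutive leaves, finally checking that the infinite concatenation of these isotopies is continuous at $E_0$. That infinite, compactly-exhausted application of \cite{LB} is exactly the mechanism you would need to justify your near-spine normalization, and it is the missing idea in your write-up.
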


\emph{Proof of i)}  It suffices to show that $\mH|H_0$ is isotopic to $\mH'|H_0$ via an isotopy fixing $S$.  Let $S = S_1, S_2, \cdots,$ $\ S= S'_1, S'_2, \cdots$ be nested sequences of leaves of $\mH|H_0$ and  $\mH'|H_1$ which converge to $E_0$.  Next isotope $H_0$, fixing $S_1$ so that $S_i$ is taken to $S'_i$ and if $f:\cup S_i\cup E_0\to \cup S'_i\cup E_0$, then $f$ is continuous and $f|S_i$ is smooth. By adjusting the isotopy using the normal flow to the leaves we can assume that $\mH$ and $\mH'$ coincide near each $S_i$.  We abuse notation by denoting $f_*(\mH)$ by $\mH$.  Let $V_k$ denote the compact region bounded by $S_k$ and $S_{k+1}$.  By \cite{LB}  each $V_k$ can be ambiently isotoped by an isotopy that is fixed near $\partial V_k$, so that $\mH|V_k$ is taken to $\mH'|V_k$.  With care the union of these isotopies give a globally defined isotopy, i.e. it is continuous at $E_0$.\qed
\vskip 8pt 

\emph{Proof of ii)}  It suffices to construct a path from $\mH_0|H_0$ to $\mH_1|H_0$ which fixes $S$ and is smooth away from the spine.  Let $E_0$ (resp. $E_1$) denote the spine of $H_0$ associated to $\mH_0$ (resp. $\mH_1)$.  Since handlebodies have unique spines up to sliding of 1-cells, there exists a path $\mH_t, t\in [1/2,1]$ so the spine of $H_0$ associated to $\mH_{1/2}$ is equal to $ E_0$ and the nonsingular leaves vary smoothly.  The result now follows from i).\qed

\vskip 8pt
\emph{Proof of iii)}  An initial ambient isotopy parametrized by $ [2/3,1]$ takes $S_0$ to $S_1$, so by the covering isotopy theorem there exists a smoothly varying path of Heegaard foliations $\mH_t, t\in [2/3, 1]$ such that $\mH_{2/3}$ has $S_0$ as a Heegaard surface.  The result now follows from ii).\qed
\vskip 8pt

\section{Thick Isotopy Lemma}

The proof of the Thick Isotopy Lemma relies on a 2-parameter min-max argument to find a path of surfaces joining two isotopic Heegaard splittings with all areas bounded from above.  

We first introduce the min-max theory with $n$ parameters.  The min-max theory is due to Almgren-Pitts \cite{P} but we will use a refinement by Simon-Smith \cite{SS} that allows one to consider sweepouts of a fixed topology.   We also need the optimal genus bounds established in \cite{K}.  

Throughout this section, $M$ denotes a closed orientable 3-manifold, and $\mathcal{H}^2(\Sigma)$ denotes the 2-dimensional Hausdorff measure of a set $\Sigma\subset M$.  Set $I^n=[0,1]^n\subset\mathbb{R}^n$.  Let $\{\Sigma_t\}_{t\in I^n}$ be a family of closed subsets of $M$ and $B\subset\partial I^n$.   We call the family $\{\Sigma_t\}$ a \emph{(genus-g) sweepout} if

\begin{enumerate}
\item $\mathcal{H}^2(\Sigma_t)$ is a continuous function of $t\in I^n$,
\item $\Sigma_t$ converges to $\Sigma_{t_0}$ in the Hausdorff topology as $t\rightarrow t_0$.
\item For $t_0\in I^n\setminus B$, $\Sigma_{t_0}$ is a smooth closed surface of genus $g$ and $\Sigma_t$ varies smoothly for $t$ near $t_0$.
\item For $t\in B$, $\Sigma_t$ consists of a $1$-complex.
\end{enumerate}

Given a family of subsets $\{\Sigma_t\}_{t\in\partial I^n}$, we say that $\{\Sigma_t\}_{t\in\partial I^n}$ \emph{extends to a sweepout} if there exists a sweepout $\{\Sigma_t\}_{t\in I^n}$ that restricts to  $\{\Sigma_t\}_{t\in\partial I^n}$ at the boundary.  A \emph{Heegaard foliation} is a sweepout $\{\Sigma_t\}$ parameterized by $[0,1]$ where $\Sigma_t$ is a Heegaard surface for $t\neq 1,0$ and the sets $\Sigma_0$ and $\Sigma_1$ are $1$-complexes in the handlebodies determined by the Heegaard splitting.  Additionally, we assume that $\{\Sigma_t\}$ is a singular foliation (with only two singular leaves). If $\mathcal{H}_0(s)_{s=0}^{1}$ and $\mathcal{H}_1(s)_{s=0}^{1}$ are two Heegaard foliations with respect to isotopic Heegaard splittings, then we call $\{\Sigma_t\}_{t\in I^2}$ a \emph{Heegaard sweepout joining $\mathcal{H}_0$ to $\mathcal{H}_1$} if it is a sweepout such that for $s\in [0,1]$, $\Sigma_{(0,s)}=\mathcal{H}_0(s)$ and $\Sigma_{(1,s)}=\mathcal{H}_1(s)$ and also for $i\in\{0,1\}$, $\{\Sigma_{(t,i)}\}_{t=0}^1$ is a continuously varying family of 1-complexes. \\
\indent 
Beginning with a genus-g sweepout $\{\Sigma_t\}$ we need to construct comparison sweepouts which agree with $\{\Sigma_t\}$ on $\partial I^n$.  We call a collection of sweepouts $\Pi$ \emph{saturated} if it satisfies the following condition:  for any map $\psi\in C^\infty(I^n\times M,M)$ such that for all $t\in I^n$, $\psi(t,.)\in \mbox{Diff}_0(M)$ and $\psi(t,.)=id$ if $t\in\partial I^n$, and a sweepout $\{\Lambda_t\}_{t\in I^n}\in\Pi$ we have $\{\psi(t,\Lambda_t)\}_{t\in I^n}\in\Pi$.   Given a sweepout $\{\Sigma_t\}$, denote by $\Pi=\Pi_{\{\Sigma_t\}}$ the smallest saturated collection of sweepouts containing $\{\Sigma_t\}$.
We define the \emph{width} of $\Pi$ to be

\begin{equation}
W(\Pi,M)=\inf_{\{\Lambda_t\}\in\Pi}\sup_{t\in I^n} \mathcal{H}^2(\Lambda_t).
\end{equation}

A \emph{minimizing sequence} is a sequence of sweepouts $\{\Sigma_t\}^i\in\Pi$ such that 

\begin{equation}
\lim_{i\rightarrow\infty}\sup_{t\in I^n}\mathcal{H}^2(\Sigma_t^i)=W(\Pi,M).
\end{equation}

Finally, a \emph{min-max sequence} is a sequence of slices $\Sigma_{t_i}^i$, $t_i\in I^n$ taken from a minimizing sequence so that $\mathcal{H}^2(\Sigma_{t_i}^i)\rightarrow W(\Pi,M)$.
The main point of the Min-Max Theory of Almgren-Pitts \cite{P} is that if the width is bigger than the maximum of the areas of the boundary surfaces, then some min-max sequence converges to a minimal surface in $M$:

\begin{theorem} (Multi-parameter Min-Max Theorem)\label{minmax}
Given a sweepout $\{\Sigma_t\}_{t\in I^n}$ of genus $g$ surfaces, if
\begin{equation}
W(\Pi,M)> \sup_{t\in\partial I^n} \mathcal{H}^2(\Sigma_t)
\end{equation}
then there exists a min-max sequence $\Sigma_i:=\Sigma_{t_i}^i$ such that

\begin{equation}
\Sigma_i\rightarrow\sum_{i=1}^k n_i\Gamma_i \mbox{     as varifolds,} 
\end{equation}
where $\Gamma_i$ are smooth closed embedded minimal surfaces and $n_i$ are positive integers. Moreover, after performing finitely many compressions on $\Sigma_i$ and discarding some components, each connected component of $\Sigma_i$ is isotopic to one of the $\Gamma_i$ or to a double cover of one of the $\Gamma_i$.  We have the following genus bounds with multiplicity:

\begin{equation}\label{genusbound}
\sum_{i\in\mathcal{O}}n_ig(\Gamma_i)+\frac{1}{2}\sum_{i\in\mathcal{N}}n_i(g(\Gamma_i)-1)\leq g,
\end{equation}
where $\mathcal{O}$ denotes the subcollection of $\Gamma_i$ that are orientable and $\mathcal{N}$ denotes those $\Gamma_i$ that are non-orientable, and where $g(\Gamma_i)$ denotes the genus of $\Gamma_i$ if it orientable, and the number of crosscaps that one attaches to a sphere to obtain a homeomorphic surface if $\Gamma_i$ is non-orientable.
\end{theorem}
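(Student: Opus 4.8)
\emph{Proof proposal.} The plan is to run the Almgren--Pitts/Simon--Smith min-max scheme, observing at each stage that the argument is insensitive to the number of parameters $n$, and then to import the genus bookkeeping from \cite{K}. So the real content is: (a) produce a minimizing sequence whose min-max sequences converge to a stationary varifold, (b) upgrade to an almost minimizing min-max sequence and invoke regularity to get smooth embedded minimal limits with the stated topological control, and (c) carry out the Euler-characteristic count giving \eqref{genusbound}.

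First I would set up the \emph{pull-tight} deformation. Starting from any minimizing sequence $\{\Sigma_t\}^i\in\Pi$, one constructs a continuous family of ambient diffeomorphisms $\phi^i\in C^\infty(I^n\times M,M)$, equal to the identity on $\partial I^n$ and on slices whose area is far below $W(\Pi,M)$, which strictly decreases the area of any slice close to the width that is far (in the varifold sense) from being stationary. Since $\Pi$ is saturated, $\{\phi^i(t,\Sigma_t^i)\}_{t\in I^n}\in\Pi$, and iterating produces a new minimizing sequence all of whose min-max sequences converge, as varifolds, to stationary integral varifolds. The construction is local in $M$ and continuous in $t$, so it is word-for-word the one of \cite{P} with $I^n$ in place of $I$.

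Next I would extract an \emph{almost minimizing} min-max sequence. By Pitts' combinatorial argument --- subdividing $I^n$ into small subcubes and comparing with competitors supported in small annuli --- there is a min-max sequence $\Sigma_i$ which is almost minimizing in sufficiently small annuli in addition to being stationary in the limit. This is the only step where passing from $n=1$ to general $n$ requires (routine) care, since one subdivides an $n$-cube rather than an interval; otherwise the argument is unchanged. Feeding $\Sigma_i$ into the Simon--Smith regularity theory \cite{SS} --- which exploits the controlled topology of the sweepout to obtain smooth embedded convergence away from finitely many points, together with removal of singularities --- yields $\Sigma_i\to\sum_{i=1}^k n_i\Gamma_i$ as varifolds, with the $\Gamma_i$ smooth, closed, embedded and minimal and the $n_i$ positive integers.

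Finally I would establish the isotopy statement and the genus bound. The fine structure of Simon--Smith convergence, as analyzed by De Lellis--Pellandini and sharpened in \cite{K}, shows that for $i$ large the surface $\Sigma_i$ decomposes, outside finitely many small balls, into nearly parallel sheets over the $\Gamma_i$ ($n_i$ sheets over an orientable $\Gamma_i$, a connected surface double-covering a non-orientable $\Gamma_i$), glued by thin necks contained in those balls. Compressing along compressing disks for the necks and discarding the resulting small spheres and inessential pieces makes every remaining component isotopic to some $\Gamma_i$ or to the boundary of a tubular neighborhood of a non-orientable $\Gamma_i$. Since compressions do not raise total genus, an Euler-characteristic count of the limit configuration --- in which a non-orientable $\Gamma_i$ carrying $g(\Gamma_i)$ crosscaps contributes $\tfrac12 n_i(g(\Gamma_i)-1)$ and an orientable one contributes $n_i g(\Gamma_i)$ --- gives \eqref{genusbound}. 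The genuine technical heart is the combined almost-minimizing/regularity step of (b) together with this last topological analysis; but as \cite{P}, \cite{SS} and \cite{K} already carry these out, what remains is only to verify that none of those arguments used $n=1$, which they do not.
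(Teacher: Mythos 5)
Your outline matches the paper's appendix step for step: pull-tight to get stationary limits, extract an almost minimizing min-max sequence via Pitts' combinatorial argument on a subdivision of $I^n$, invoke Colding--De Lellis/Simon--Smith regularity, and then read off the genus bound from \cite{K}. The one mismatch in emphasis is that you call the combinatorial extraction ``routine,'' whereas the paper identifies exactly this step as the one requiring nontrivial new work for $n>1$ and devotes its entire written proof to it (Lemmas \ref{almostmin} and \ref{allsmall}: Pitts' Proposition 4.9 applied to $L=(3^n)^{3^n}$-admissible families of annuli on the cubical complex $I(n,j)$, followed by a Vitali-type covering argument to pass to sufficiently small annuli).
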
\noindent
Theorem \ref{minmax} is proved in the Appendix.  We can now state the main application of the min-max theory in our setting:

\begin{theorem} \label{sweepout} Let $N$ be a closed hyperbolic 3-manifold and let $\{\Sigma_t\}_{t\in I^n}$ be a genus-g sweepout.  Set $C=\max(\sup_{t\in\partial I^n}\mathcal{H}^2(\Sigma_t), 2\pi(2g-2))$.  Then for all $\epsilon>0$, there is a sweepout $\{\Lambda_t\}_{t\in I^n}$ extending $\{\Sigma_t\}_{t\in\partial I^n}$ such that $\sup_{t\in I^n}\mathcal{H}^2(\Lambda_t)\le C+\epsilon$.\end{theorem}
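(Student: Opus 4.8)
The plan is to argue by contradiction using the multi--parameter min--max Theorem~\ref{minmax} and the Gauss--Bonnet theorem. Write $\Pi=\Pi_{\{\Sigma_t\}}$ for the smallest saturated collection of sweepouts containing $\{\Sigma_t\}$. Each ambient isotopy used to generate $\Pi$ is the identity on $\partial I^n$, so every member of $\Pi$ is a sweepout that restricts to $\{\Sigma_t\}_{t\in\partial I^n}$ on $\partial I^n$; hence it is enough to show
\begin{equation*}
W(\Pi,N)\le C,
\end{equation*}
for then, by definition of the width as an infimum, every $\epsilon>0$ admits a member $\{\Lambda_t\}\in\Pi$ with $\sup_{t\in I^n}\mathcal{H}^2(\Lambda_t)\le C+\epsilon$, and this $\{\Lambda_t\}$ extends $\{\Sigma_t\}_{t\in\partial I^n}$, as required. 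Suppose, for contradiction, that $W(\Pi,N)>C$. Since $C\ge\sup_{t\in\partial I^n}\mathcal{H}^2(\Sigma_t)$, the hypothesis of Theorem~\ref{minmax} is met, so there is a min--max sequence $\Sigma_i$ converging as varifolds to $\sum_{j=1}^k n_j\Gamma_j$, with the $\Gamma_j$ smooth closed embedded minimal surfaces, $n_j\in\BN$, satisfying the genus bound \eqref{genusbound}; moreover the min--max limit carries no loss of area, so $W(\Pi,N)=\sum_j n_j\mathcal{H}^2(\Gamma_j)$.

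The next step is to bound $\mathcal{H}^2(\Gamma_j)$ for each $j$. Since $N$ has constant curvature $-1$, the Gauss equation together with minimality ($\kappa_2=-\kappa_1$) gives $K_{\Gamma_j}=-1+\kappa_1\kappa_2=-1-\kappa_1^2\le-1$ at every point of $\Gamma_j$, so Gauss--Bonnet yields $2\pi\chi(\Gamma_j)=\int_{\Gamma_j}K_{\Gamma_j}\,dA\le-\mathcal{H}^2(\Gamma_j)$, i.e.
\begin{equation*}
\mathcal{H}^2(\Gamma_j)\le-2\pi\chi(\Gamma_j);
\end{equation*}
in particular $\chi(\Gamma_j)<0$, so no $\Gamma_j$ is a sphere, projective plane, torus, or Klein bottle. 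Let $\hat\Gamma_j$ denote $\Gamma_j$ itself if $\Gamma_j$ is orientable and its orientable double cover otherwise, and put $h_j=\genus(\hat\Gamma_j)\ge2$; applying the displayed inequality to the minimal surface $\hat\Gamma_j$, whose area is $\mathcal{H}^2(\Gamma_j)$ or $2\mathcal{H}^2(\Gamma_j)$ according as $\Gamma_j$ is orientable or not, gives $\mathcal{H}^2(\Gamma_j)\le2\pi(2h_j-2)$ when $\Gamma_j$ is orientable and $\mathcal{H}^2(\Gamma_j)\le\pi(2h_j-2)$ when it is not. Finally, each $\Sigma_i$ is an orientable surface, so by the last clause of Theorem~\ref{minmax} no component of the compressed $\Sigma_i$ is isotopic to a non-orientable $\Gamma_j$; such a $\Gamma_j$ is therefore approached only through two--sheeted covers, so its multiplicity $n_j$ is even, in particular $n_j\ge2$.

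The last step is to assemble these estimates. Set $m_j=n_j$ if $\Gamma_j$ is orientable and $m_j=n_j/2$ if it is not, so that $m_j\ge1$ for every $j$; then \eqref{genusbound} becomes $\sum_j m_jh_j\le g$, and in either case $n_j\mathcal{H}^2(\Gamma_j)\le4\pi\,m_j(h_j-1)$. The collection $\{\Gamma_j\}$ is nonempty because $W(\Pi,N)>C\ge0$, so $\sum_j m_j\ge1$, and therefore
\begin{align*}
W(\Pi,N)=\sum_j n_j\mathcal{H}^2(\Gamma_j)&\le4\pi\sum_j m_j(h_j-1)\\
&=4\pi\sum_j m_jh_j-4\pi\sum_j m_j\le4\pi g-4\pi=2\pi(2g-2)\le C,
\end{align*}
contradicting $W(\Pi,N)>C$. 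Hence $W(\Pi,N)\le C$, which proves the theorem.

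The two places I expect to need care both lie in extracting slightly more from the min--max machinery than is literally written in Theorem~\ref{minmax}: that the min--max limit carries no loss of area, so that $W(\Pi,N)$ equals $\sum_j n_j\mathcal{H}^2(\Gamma_j)$ rather than merely bounding it from above — this is exactly what makes the key inequality run in the direction we need — and that a one--sided limiting minimal surface must occur with even multiplicity, which is needed to obtain the sharp constant $2\pi(2g-2)$ in place of $2\pi(2g-1)$. Both are standard features of the Almgren--Pitts \cite{P} and Simon--Smith \cite{SS} constructions (the parity coming from the two--sidedness of the approximating surfaces); the remainder is the elementary Gauss--Bonnet estimate and the bookkeeping displayed above.
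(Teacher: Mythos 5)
Your proof is correct and follows essentially the same line as the paper: reduce to showing $W(\Pi,N)\le C$, invoke Theorem~\ref{minmax} for a contradiction, bound the area of each limiting minimal surface by Gauss--Bonnet in the hyperbolic metric, and combine with the genus bound \eqref{genusbound} and the even multiplicity of any non-orientable $\Gamma_j$ to conclude $W(\Pi,N)\le 2\pi(2g-2)$. The two points you flag at the end (mass convergence giving $W(\Pi,N)=\sum n_j\mathcal{H}^2(\Gamma_j)$, and the parity of the multiplicity of one-sided limits) are exactly the same facts the paper uses without further comment; your bookkeeping with $m_j=n_j$ or $n_j/2$ and $h_j$ the orientable genus is a slightly cleaner repackaging of the paper's sum over $\mathcal{O}$ and $\mathcal{N}$, but mathematically identical.
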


The following is an immediate corollary of Theorem \ref{sweepout}.

\begin{lemma} \label{controlled area}  Let $N$ be a closed hyperbolic 3-manifold and let $\mH_0$ and $\mH_1$ be isotopic Heegaard foliations on $N$ representing Heegaard surfaces of genus $g$.  Denote the leaves of $\mH_i$ by $\mH_i(t), t\in [0,1]$.  Let $C$ denote $\max\{2\pi(2g-2), \max\{\area(\mH_s(t))|s\in \{0,1\}, t\in [0,1]\}\}$.  Then for all $\epsilon>0$, $\mH_0, \mH_1$ extend to a sweepout $\{\Sigma_t\}_{t\in I^2}$ such that $\sup_{t\in I^2}\mH^2(\Sigma_t)\le C+\epsilon$.\end{lemma}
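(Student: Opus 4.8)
The plan is to deduce Lemma \ref{controlled area} directly from Theorem \ref{sweepout} by packaging the two given Heegaard foliations $\mH_0$ and $\mH_1$ into a single genus-$g$ sweepout parametrized by $\partial I^2$ and then invoking the theorem to fill in the interior with controlled area. First I would set up the boundary data: identify the four sides of $\partial I^2$ with the pieces $\{0\}\times[0,1]$, $\{1\}\times[0,1]$, $[0,1]\times\{0\}$, and $[0,1]\times\{1\}$, and declare $\Sigma_{(0,s)}=\mH_0(s)$ and $\Sigma_{(1,s)}=\mH_1(s)$ for $s\in[0,1]$. On the remaining two sides $[0,1]\times\{i\}$ for $i\in\{0,1\}$, I need a continuously varying family of $1$-complexes interpolating between the singular leaves $\mH_0(i)$ and $\mH_1(i)$; this is exactly the kind of path of spines furnished by Lemma \ref{pathoffoliations}(iii), since the two Heegaard foliations come from isotopic Heegaard splittings, so such an interpolation through $1$-complexes exists and matches up at the corners of $I^2$. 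This produces a family $\{\Sigma_t\}_{t\in\partial I^2}$ satisfying conditions (1)--(4) in the definition of sweepout restricted to the boundary, i.e. a boundary family that extends to a genus-$g$ sweepout (here one uses that a Heegaard sweepout joining $\mH_0$ to $\mH_1$, in the sense defined in \S2, exists at all — which again is the content of Lemma \ref{pathoffoliations} together with a standard construction of the singular foliations).

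Next I would compute the constant. By construction the boundary slices are either leaves of $\mH_0$ or $\mH_1$ (the two sides $\{i\}\times[0,1]$) or $1$-complexes (the two sides $[0,1]\times\{i\}$). The $1$-complexes have vanishing $2$-dimensional Hausdorff measure, so $\sup_{t\in\partial I^2}\mH^2(\Sigma_t)=\max\{\area(\mH_s(t))\mid s\in\{0,1\},\ t\in[0,1]\}$. Therefore the quantity $C$ appearing in Theorem \ref{sweepout}, namely $\max(\sup_{t\in\partial I^2}\mH^2(\Sigma_t),\,2\pi(2g-2))$, is precisely the constant $C$ named in the statement of Lemma \ref{controlled area}. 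Applying Theorem \ref{sweepout} to this boundary family and the given $\epsilon>0$ yields a sweepout $\{\Lambda_t\}_{t\in I^2}$ extending $\{\Sigma_t\}_{t\in\partial I^2}$ with $\sup_{t\in I^2}\mH^2(\Lambda_t)\le C+\epsilon$. Relabeling $\{\Lambda_t\}$ as $\{\Sigma_t\}$ gives the desired extension of $\mH_0,\mH_1$, completing the proof.

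The step I expect to be the genuine content — as opposed to bookkeeping — is the construction of the boundary family on the two sides consisting of $1$-complexes, i.e. verifying that $\mH_0$ and $\mH_1$ really do fit together as the $s=0$ and $s=1$ edges of an honest sweepout of $I^2$. This requires knowing that the two spines of $H_0$ (and of $H_1$) associated to $\mH_0$ and $\mH_1$ can be connected by a continuous path of $1$-complexes inside the respective handlebodies, which is where the isotopy of the Heegaard splittings and Lemma \ref{pathoffoliations}(iii) are used; once the splittings are identified the path of foliations constructed there restricts on each $H_i$ to a path of spines, and taking the singular leaves of those foliations gives the required family of $1$-complexes. Everything else — continuity of $\mH^2$, Hausdorff convergence, the genus-$g$ condition on interior slices — is inherited from the corresponding properties of Heegaard foliations and the output of Theorem \ref{sweepout}, so no further estimates are needed. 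In short, Lemma \ref{controlled area} is the $n=2$ specialization of Theorem \ref{sweepout} once the boundary sweepout has been assembled, and the only non-formal ingredient is the existence of that boundary sweepout, supplied by \S1.
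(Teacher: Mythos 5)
Your proof is correct and takes essentially the same approach as the paper, which simply records Lemma \ref{controlled area} as an ``immediate corollary'' of Theorem \ref{sweepout}. You have merely spelled out the bookkeeping the paper leaves implicit: assemble $\mH_0$, $\mH_1$, and interpolating families of $1$-complexes (via Lemma \ref{pathoffoliations}) into a boundary family on $\partial I^2$, note that the $1$-complexes contribute zero to $\sup_{\partial I^2}\mathcal{H}^2$ so that the constant in Theorem \ref{sweepout} agrees with the $C$ in the lemma, and apply Theorem \ref{sweepout} with $n=2$.
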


\noindent\emph{Proof of Theorem \ref{sweepout}.} 
By Lemma \ref{pathoffoliations}, we can find a sweepout $\{\Sigma_t\}_{t\in I^n}$ extending $\{\Sigma_t\}_{t\in\partial I^n}$.  Denote by $\Pi$ the saturation of sweepouts containing $\{\Sigma_t\}_{t\in I^n}$.  We argue by contradiction. Thus assume there exists an $\epsilon>0$ so that all sweepouts in $\Pi$ have a slice with area greater than $\max(2\pi(g-2),\sup_{t\in\partial I^n}\mathcal{H}^2(\Sigma_t))+\epsilon$.  Then the width $W(\Pi,N)$ of $\Pi$ satisfies:  

\begin{equation}\label{width}
W(\Pi,N)\geq\max(2\pi(2g-2),\sup_{t\in\partial I^n}\mathcal{H}^2(\Sigma_t))+\epsilon.  
\end{equation}

In particular \eqref{width} implies that $W(\Pi,N)$ is strictly greater than the area of all boundary surfaces.  Thus the Min-Max Theorem \ref{minmax} applies to give a positive integer combination of minimal surfaces  $\Gamma=\sum_i n_i\Gamma_i$, so that $W(\Pi,N)=\sum n_i\mH^2(\Gamma_i)$.   The area of a genus $g$ surface in a hyperbolic $3$-manifold is at most $2\pi (2g-2)$ and the area of a non-orientable surface is at most $2\pi (k-2)$ where $k$ is the number of cross-caps one must add to a sphere to obtain a homeomorphic surface.  Using these bounds, along with the genus bound \eqref{genusbound} and the fact that the multiplicity $n_i$ of a non-orientable surface occuring among the $\Gamma_i$ is even we obtain:
$$W(\Pi,N) =\sum n_i \mathcal{H}^2(\Gamma_i)\leq 2\pi\sum_{i\in\mathcal{O}} n_i(2g_i-2)+2\pi\sum_{i\in\mathcal{N}} n_i(g_i-2)\leq 2\pi(2g-2).$$  This contradicts \eqref{width}.
\qed
\\
\\
\noindent

To prove the main result of this section Lemma \ref{thick isotopy} we need to bound area from above and diametric soul from below.  The min-max theory enabled the area bound.  The following topological arguments will enable the diametric soul bound.  

\begin{definition}  Let $M$ be a Riemannian 3-manifold,  $S$ a surface embedded in $M$.  We say that $S$ is $\delta$-compressible if some essential simple closed curve of diameter $<\delta$  bounds an embedded disc in a closed complementary region.  Otherwise we say that  $S$ is $\delta$-\emph{locally incompressible}.  

Let $S$ be a closed separating surface in the 3-manifold $M$.  We say that $S$ is $\delta$-\emph{bi-compressible} if there exist essential simple closed curves in $S$ of diameter  $<\delta$ that respectively bound discs in each closed complementary region.  \end{definition}

\begin{lemma}   \label{local relations} Suppose that $M\neq S^3$.  If the Heegaard surface $S\subset M$ is $\eta$-bi-compressible, where $\eta<\delta_0/4$ and $\delta_0$ is the injectivity radius of $M$, then $S$ is weakly reducible. \end{lemma}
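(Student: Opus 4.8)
The plan is to produce, from the two small-diameter compressing curves, a \emph{weakly reducing} pair of meridian discs, i.e.\ disjoint essential discs $D_0\subset H_0$ and $D_1\subset H_1$. By hypothesis there are essential simple closed curves $\alpha_0,\alpha_1\subset S$, each of diameter $<\eta<\delta_0/4$, with $\alpha_i$ bounding an embedded disc $D_i'$ in $H_i$. The first step is to upgrade each $D_i'$ to an embedded disc $D_i$ whose \emph{boundary} still has small diameter, but which in addition lies inside a metric ball of radius $\delta_0/2$; this is where the injectivity-radius hypothesis enters. Since $\diam(\alpha_i)<\eta<\delta_0/4$, the curve $\alpha_i$ is contained in a ball $B_i$ of radius $\delta_0/4$ about any of its points, and such a ball is embedded and (since it is a metric ball of radius less than the injectivity radius) has the property that $\pi_1(B_i)\to\pi_1(M)$ is trivial and $B_i$ is contained in a slightly larger embedded ball $B_i^+$ of radius $\delta_0/2$ which is a standard open $3$-ball. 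So $\alpha_i$ is null-homotopic already inside $B_i^+$.

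The key step is then a cut-and-paste / innermost-disc argument to arrange the two discs to be disjoint. Consider the disc $D_i'\subset H_i$ with $\partial D_i'=\alpha_i\subset B_i^+$. Since $\alpha_0$ and $\alpha_1$ both lie in small balls, and $\diam(\alpha_0)+\diam(\alpha_1)<\delta_0/2$, either the balls $B_0^+$ and $B_1^+$ can be taken disjoint (if $\alpha_0$ and $\alpha_1$ are far apart in $S$), in which case the discs obtained by pushing $D_i'$ slightly into $H_i$ can be taken with $\partial D_i$ disjoint on $S$, or else $\alpha_0\cup\alpha_1$ lies in a single embedded ball $B\subset M$ of radius $<\delta_0$. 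In the latter case I would argue as follows: $B\cap S$ is a planar-or-simple piece of the Heegaard surface, and within the standard ball $B$ one can isotope $\alpha_0$ and $\alpha_1$ off each other on $S$ unless they are isotopic in $S$; but if $\alpha_0$ is isotopic to $\alpha_1$ in $S$ then the curve $\alpha_0$ bounds discs on \emph{both} sides of $S$, making $S$ reducible, hence certainly weakly reducible. Once $\partial D_0$ and $\partial D_1$ are disjoint curves on $S$, a standard innermost-disc argument (using that $H_0$ and $H_1$ are handlebodies, hence irreducible) removes all circles of $D_0\cap D_1$ in the interior, yielding genuinely disjoint essential meridian discs $D_0\subset H_0$, $D_1\subset H_1$. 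That is precisely the definition of $S$ being weakly reducible.

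The one point that needs care — and the main obstacle — is the case where the two small compressing curves are forced to interact: they might lie in the same small ball and be non-separated on $S$, or one of them might fail to remain essential after being isotoped within that ball. The hypothesis $M\neq S^3$ is used here: on a Heegaard surface in a manifold other than $S^3$, a curve that is essential in $S$ cannot be made inessential by an ambient isotopy, so the ``isotope off each other'' move either genuinely separates $\alpha_0$ from $\alpha_1$ or exhibits $\alpha_0\simeq\alpha_1$ in $S$, the latter forcing reducibility as above. I would also need to check that after pushing $D_i'$ into the interior of $H_i$ the disc stays essential in $H_i$ — this is automatic since $\partial D_i=\alpha_i$ is essential in $S$ and $S$ is the common boundary of the two handlebodies, so a disc in $H_i$ with essential boundary on $S$ is an essential (meridian) disc. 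Assembling these pieces gives a weakly reducing pair, completing the proof.
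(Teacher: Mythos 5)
Your division into two cases (disjoint small balls vs.\ a single small ball containing both curves) matches the paper's, and the first case is handled essentially as the paper does. The problem is in the single-ball case, where your argument has a genuine gap.

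You assert that ``within the standard ball $B$ one can isotope $\alpha_0$ and $\alpha_1$ off each other on $S$ unless they are isotopic in $S$,'' relying on the unproved claim that $B\cap S$ is a ``planar-or-simple piece.'' Neither claim is justified, and the first is simply false. Nothing about the injectivity-radius bound or the smallness of $B$ controls the topology of $B\cap S$: it can be a high-genus subsurface, and even if it is planar it can, for instance, be a four-holed sphere containing two essential simple closed curves that separate its four boundary components in two different ways. Such a pair intersects essentially (cannot be made disjoint) yet is \emph{not} isotopic in the subsurface, let alone in $S$, and both curves can perfectly well be essential in $S$ with compressing discs on opposite sides. So the dichotomy ``either they can be made disjoint or they are isotopic in $S$'' does not hold, and the argument does not close. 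The remark that ``essential curves cannot become inessential under ambient isotopy'' is true but irrelevant to this difficulty.

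The paper's proof sidesteps the issue entirely by never trying to make $\alpha_0$ and $\alpha_1$ disjoint from each other. Instead, one works with the boundary sphere $\partial E$ of the small ball $E$ (of diameter $<4\eta<\delta_0$, hence embedded). Every circle of $S\cap\partial E$ is automatically disjoint from both $\alpha_0$ and $\alpha_1$ (which lie in the interior of $E$) and bounds a disc on the sphere $\partial E$, hence compresses to one side of $S$. Since $\alpha_0$ is essential in $S$ and $S\not\subset E$ (here is where $M\neq S^3$ is used), some component $\alpha$ of $S\cap\partial E$ must be essential in $S$; pairing $\alpha$ with whichever of $\alpha_0,\alpha_1$ compresses to the opposite handlebody yields the weak reduction. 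To repair your argument you would need to replace the ``isotope the two curves apart'' step with this kind of third-curve construction (or some equivalent device); as written, the single-ball case is not proved.
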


\begin{proof}   Either there exist disjoint balls of diameter $<2\eta$ containing compressible curves of each handlebody or a single ball $E$ of diameter $<4\eta$ contains such compressions.  The result is immediate in the former case.  In the latter case some essential curve $\alpha$ of $S\cap \partial E$ compresses in one of the handelbodies and that together with one of the other compressions gives the weak reduction.\end{proof} 

\begin{definition} Let $\mH$ be a Heegaard foliation with leaves parametrized by $ [0,1]$.  If $t\in (0,1)$, then the \emph{$H_0$ (resp. $H_1$) side} of $\mH(t)$ is the component of the closed complement that contains $\mH(0)$, (resp. $\mH(1)$).  More generally if $T_t$ is a 2-dimensional sweepout which coincides on $\partial I\times I$ with a path of Heegaard foliations, then for $v\in [0,1]\times (0,1)$ use the smooth variance to define the $H_0$ and $H_1$ sides of $T_v$.  It therefore makes sense to say that a curve in $T_v$ \emph{compresses to the $H_0$ or $H_1$ side}. 

Define $\mecd_0(T_v)$ (resp. $\mecd_1(T_v) $) (the \emph{minimal essential compressing diameter}) to be the minimal diameter of a curve in $T_v, v\in I\times (0,1)$ that compresses to the $H_0$ (resp. $H_1$) side.  \end{definition}

Since the surfaces $T_v$ vary continuously in $v$ we obtain: 

\begin{lemma} \label{mecd continuity}  The functions $\mecd_0$, $\mecd_1$ are continuous and extend to continuous functions on $I\times I$.\qed\end{lemma}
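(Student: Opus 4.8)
\textbf{Proof proposal for Lemma~\ref{mecd continuity}.}

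The plan is to establish continuity of $\mecd_0$ on the open set $I\times(0,1)$ first, and then analyze the boundary behavior separately to obtain the continuous extension to $I\times I$. The symmetric argument handles $\mecd_1$. Throughout, the key input is that on $I\times(0,1)$ the family $\{T_v\}$ varies smoothly (by the definition of a 2-dimensional sweepout coinciding with a path of Heegaard foliations on $\partial I\times I$, together with condition (3) in the definition of sweepout), so that the $H_0$/$H_1$ sides are well defined and vary continuously, and the diameter function on curves behaves well under the smooth variation.

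For the interior continuity, fix $v_0\in I\times(0,1)$ and let $d=\mecd_0(T_{v_0})$, realized by an essential simple closed curve $\alpha_0\subset T_{v_0}$ of diameter $d$ that compresses to the $H_0$ side via an embedded disc. First I would show upper semicontinuity: since $\{T_v\}$ varies smoothly near $v_0$, for $v$ close to $v_0$ there is an ambient isotopy $\phi_v$ (close to the identity, depending continuously on $v$) carrying $T_{v_0}$ to $T_v$ and carrying the $H_0$ side to the $H_0$ side; then $\phi_v(\alpha_0)$ is essential in $T_v$, still compresses to the $H_0$ side, and has diameter $\le d+o(1)$ as $v\to v_0$, so $\limsup_{v\to v_0}\mecd_0(T_v)\le d$. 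For lower semicontinuity, suppose not: then there is a sequence $v_k\to v_0$ and essential curves $\alpha_k\subset T_{v_k}$ compressing to the $H_0$ side with $\diam(\alpha_k)\to d'<d$. Pull these back by $\phi_{v_k}^{-1}$ to curves $\beta_k\subset T_{v_0}$ of diameter $\to d'$; these compress to the $H_0$ side of $T_{v_0}$ (the isotopy respects sides), and each is essential in $T_{v_0}$ since $\phi_{v_k}$ is a homeomorphism. But $\mecd_0(T_{v_0})=d>d'$ means no essential curve in $T_{v_0}$ compressing to the $H_0$ side has diameter as small as, say, $(d+d')/2$ — contradiction once $k$ is large. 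Hence $\mecd_0$ is continuous on $I\times(0,1)$.

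The genuine obstacle is the extension to $I\times I$, i.e. the behavior as the second coordinate tends to $0$ or $1$, where $T_v$ degenerates toward a spine (a $1$-complex) and the notion of ``compressing to the $H_i$ side'' must be interpreted in the limit. The point is that as $v\to(t_0,0)$ the surface $T_v$ is a Heegaard surface being swept down onto the spine $E_0$ of $H_0$; a curve compressing to the $H_0$ side bounds a disc in the ever-thinner handlebody neighborhood of $E_0$, so such curves either have diameter bounded below away from $0$ (those linking the spine nontrivially) or shrink to points, and in either case the infimal essential compressing diameter converges. Concretely, I would fix a compact core/spine picture: near the boundary $\{s=0\}$ the foliation is a standard collar of the spine, so $T_{(t,s)}$ for small $s$ is isotopic in a controlled way to $T_{(t,s')}$ by an isotopy that is $C^0$-small as $s,s'\to 0$, which transplants curves and their diameters continuously and forces $\lim_{s\to 0}\mecd_0(T_{(t,s)})$ to exist and to depend continuously on $t$. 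Defining $\mecd_0$ on $\{s=0\}\cup\{s=1\}$ by this limit yields the continuous extension; the same reasoning, using the role-reversal of $H_0$ and $H_1$, handles $\mecd_1$, and handles the two edges $\{t=0\}$ and $\{t=1\}$ where $T_v$ restricts to the given paths of Heegaard foliations. I expect the bookkeeping of ``sides'' across the degeneration — making precise that an embedded compressing disc on the $H_0$ side of $T_v$ persists (up to isotopy of bounded diameter) under the smooth variation, including near the spine — to be the main technical point; everything else is a compactness-plus-isotopy argument of the kind already used in part~(i) of Lemma~\ref{pathoffoliations}.
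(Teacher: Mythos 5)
The paper itself offers no argument for this lemma: the sentence preceding it --- ``Since the surfaces $T_v$ vary continuously in $v$ we obtain:'' --- is the whole proof, and the \qed\ appears in the statement. Your proposal is therefore far more detailed than the paper, and your interior continuity argument (upper semicontinuity by pushing a near-minimizing compressing curve forward under a $C^0$-small ambient isotopy that respects the two sides, lower semicontinuity by pulling back) is the right idea, modulo the cosmetic point that the infimum defining $\mecd_0(T_{v_0})$ need not be attained (use an $\epsilon$-approximate minimizer instead of ``realized by an essential simple closed curve'').

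The boundary discussion, however, is where you have a genuine confusion, and it matters because that is precisely the part of the lemma that the Thick Isotopy Lemma actually uses. You write that ``a curve compressing to the $H_0$ side bounds a disc in the ever-thinner handlebody neighborhood of $E_0$, so such curves either have diameter bounded below away from $0$ (those linking the spine nontrivially) or shrink to points.'' This is backwards: the essential curves that compress into the thin handlebody side are exactly the meridians of the spine, which \emph{do} link the spine nontrivially, and they shrink with the surface. There is no dichotomy; the infimum is controlled by these meridians, so the extended value is $\mecd_0\equiv 0$ on $I\times\{0\}$, and symmetrically $\mecd_1\equiv 0$ on $I\times\{1\}$. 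You never actually identify these boundary values, and ``in either case the infimal essential compressing diameter converges'' does not follow from your stated dichotomy. The boundary values are exactly what the proof of Lemma~\ref{thick isotopy} needs (it uses that $[0,1]\times\{0\}\subset G_{1.5\delta}$ and $[0,1]\times\{1\}\subset R_{1.5\delta}$), so this is the one place where you should be precise rather than heuristic. One further point worth a sentence, which neither you nor the paper addresses: for interior slices $T_v$ that are not Heegaard surfaces there may be no essential curve compressing to a given side, so $\mecd_i$ should be treated as $[0,\infty]$-valued; your semicontinuity arguments still go through in that setting, but you should say so.
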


\begin{lemma}  \label{avoidance} Let $\mH$ be a Heegaard foliation in the Riemannian manifold $M$ and let $t\in (0,1)$.     Suppose that $\delta<\delta_0/16$ where $\delta_0$ is the injectivity radius.  If $1>w>t$ and $\mecd_0(\mH(w))<2\delta$, then either $\mecd_0(\mH(t))<4\delta$ or $\mH(w)$ is $4\delta$-bi-compressible.  \end{lemma}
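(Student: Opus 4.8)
\emph{Proof proposal.} Write $V_{s}^{0}$ (resp.\ $V_{s}^{1}$) for the closed $H_0$‑side (resp.\ $H_1$‑side) of $\mH(s)$, and let $P\subset V_{w}^{0}$ be the closed region lying between $\mH(t)$ and $\mH(w)$. Since $w>t$ and $\mH$ is a foliation, $P$ is a product collar $S\times[0,1]$ with $S\times\{0\}=\mH(t)$, $S\times\{1\}=\mH(w)$, and $V_{w}^{0}=V_{t}^{0}\cup P$; in particular the inclusion $V_{t}^{0}\hookrightarrow V_{w}^{0}$ is a homotopy equivalence. I prove the contrapositive: assuming $\mH(w)$ is not $4\delta$‑bi‑compressible, I deduce $\mecd_0(\mH(t))<4\delta$. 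Fix an essential simple closed curve $\gamma\subset\mH(w)$ with $\diam\gamma<2\delta$ bounding an embedded disc in $V_{w}^{0}$, choose $r\in(\diam\gamma,2\delta)$, and let $B$ be the open metric $r$‑ball about a point of $\gamma$, with closure $\overline B$ and geodesic sphere $\partial B$; then $\gamma\subset B$, $\overline B$ is a smoothly embedded ball (as $r<\delta_0$), and $\diam\overline B\le 2r<4\delta$.

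\textbf{Step 1: a small compressing disc.} The first goal is to show that either $\mH(w)$ is $4\delta$‑bi‑compressible, or there is an essential simple closed curve $c\subset\mH(w)$ with $\diam c<4\delta$ bounding an embedded disc $\Delta\subset V_{w}^{0}\cap\overline B$. Take a properly embedded disc $D\subset V_{w}^{0}$ with $\partial D=\gamma$ minimizing $|D\cap\partial B|$. If $D\cap\partial B=\emptyset$ then $D\subset\overline B$ and $c=\gamma,\ \Delta=D$ work. Otherwise a circle of $D\cap\partial B$ innermost on $D$ cuts off a subdisc $D'\subset V_{w}^{0}$ with $\partial D'\subset\partial B$ and interior disjoint from $\partial B\cup\mH(w)$; a standard innermost‑disc surgery of $D$ along a subdisc of the sphere $\partial B$ strictly lowers $|D\cap\partial B|$ unless $\partial D'$ is essential in the planar surface $\partial B\cap V_{w}^{0}$. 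In that exceptional case both discs that $\partial D'$ bounds on $\partial B$ meet $\mH(w)$; picking one of them, an innermost circle of its intersection with $\mH(w)$ cuts off a disc $\Delta\subset\partial B$ with $\partial\Delta\subset\mH(w)$, $\diam\Delta\le\diam\overline B<4\delta$, and interior disjoint from $\mH(w)$, hence contained in $V_{w}^{0}$ or in $V_{w}^{1}$. After discarding, by further innermost‑disc moves, the intersection circles that are inessential in $\mH(w)$, the curve $c:=\partial\Delta$ is essential in $\mH(w)$, so $\Delta$ is a genuine compressing disc. If $\Delta\subset V_{w}^{1}$, then $c$ together with $\gamma$ displays $\mH(w)$ as $4\delta$‑bi‑compressible, contrary to hypothesis; hence $\Delta\subset V_{w}^{0}\cap\overline B$, as claimed.

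\textbf{Step 2: pushing the disc onto $\mH(t)$.} Let $c\subset\mH(w)$ and $\Delta\subset V_{w}^{0}\cap\overline B$ be as in Step 1 (the case $c=\gamma$ included), make $\Delta$ transverse to $\mH(t)$, and let $R$ be the component of $\Delta\cap P$ containing $\partial\Delta=c$; near $c$ the disc $\Delta$ enters $P$ since $\mH(w)=S\times\{1\}$, so $R$ is a planar subsurface of $\Delta$ with $\partial R=c\cup c_1\cup\dots\cup c_q$ and $c_i\subset\mH(t)$. Here $q\ge1$: if $q=0$ then $\Delta=R$ is a properly embedded disc in $P\cong S\times[0,1]$ bounded by the essential curve $c\subset S\times\{1\}$, impossible since $\pi_1(S)\hookrightarrow\pi_1(S\times[0,1])$. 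Reading the boundary relation of the planar surface $R$ into $\pi_1(S\times[0,1])=\pi_1(S)$, the class of $c$ is a product of conjugates of the $[c_i]^{\pm1}$; as $c$ is essential, some $c_i$—say $c_1$—is essential in $\mH(t)$. Now $c_1$ separates the disc $\Delta$, and the component of $\Delta\setminus c_1$ not containing $c$ is an embedded disc with boundary $c_1$; hence $c_1$ is null‑homotopic in $V_{w}^{0}$, therefore in $V_{t}^{0}$ (the inclusion being a homotopy equivalence), so by Dehn's Lemma $c_1$ bounds an embedded disc in $V_{t}^{0}$—a compressing disc for $\mH(t)$ on its $H_0$‑side. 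Finally $c_1\subset R\subset\Delta\subset\overline B$, so $\diam c_1\le\diam\overline B<4\delta$, whence $\mecd_0(\mH(t))\le\diam c_1<4\delta$, as required.

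\textbf{Main obstacle.} The crux is Step 1: the compressing disc of $\gamma$ furnished by $\mecd_0(\mH(w))<2\delta$ may be arbitrarily large, whereas every diameter estimate in Step 2 rests on confining a compressing disc to the fixed $2\delta$‑ball $\overline B$. The substantive work is the surgery bookkeeping on $\partial B$—and above all converting the \emph{failure} to confine $D$ into an honest essential compression of $\mH(w)$ of diameter $<4\delta$ on the $H_1$‑side, which is exactly what triggers the $4\delta$‑bi‑compressibility alternative. These innermost‑disc and innermost‑sphere manipulations use the irreducibility of $\overline B$ and, in the application at hand, of $M$.
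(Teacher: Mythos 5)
Your proposal is correct and follows the same strategy as the paper: restrict attention to a small ball $\overline B$ of diameter $<4\delta$ about the given compressible curve $\gamma$, extract from the intersection pattern on $\partial B$ a small-diameter compressing disc for $\mH(w)$ (in the non-bi-compressible case), and then use the product region $P$ between $\mH(t)$ and $\mH(w)$ to convert this into a small-diameter compression of $\mH(t)$ on the $H_0$ side. The two arguments differ in implementation of each step. In Step 1 you minimize $|D\cap\partial B|$ over compressing discs of $\gamma$ and carry out the innermost-disc bookkeeping explicitly, whereas the paper selects an innermost component of $T_w\cap\partial B$ that is essential in $T_w$ directly; these yield the same curve up to the same standard surgery moves. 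In Step 2 the paper picks an innermost essential circle of $T_t\cap D$ on $D$ and appeals to the product structure, while you read off the $\pi_1$ boundary relation for the planar subsurface $R$ of $\Delta$ to find an essential $c_i$, show $c_i$ is null-homotopic in $V_w^0$ (hence in $V_t^0$), and invoke Dehn's Lemma/loop theorem to produce an embedded compressing disc. Both are legitimate. One small bonus of your version: by confining the compressing disc $\Delta$ to $\overline B$ from the outset, you get the diameter bound on the $c_i$ automatically, which neatly sidesteps the paper's terse and slightly under-justified remark that $D$ "can be chosen so that $D\cap T_t\subset\partial B$." Your use of the loop theorem is a heavier hammer than the paper's elementary innermost-disc argument, but it is harmless.
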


\begin{proof} Let $T_s$ denote $\mH(s)$ and hence $\mecd_0(T_w)< 2\delta$.  Let $B$ be a smooth ball of diameter $<4\delta$, transverse to both $T_w$ and $T_t$, such that some essential curve $\gamma\subset T_w\cap B$  compresses to the $H_0$ side.  Among the components of $T_w\cap \partial B$ that are essential in $T_w$, choose an innermost one.   This curve $\alpha$ has diameter $<4\delta$ and compresses to either the $H_0$ or $H_1$ sides.  If it compresses to the $H_1$ side we are done.  Otherwise let $D$ be a compression disc for $\alpha$ that lies the $H_0$ side of $T_w$.  It can be chosen so that $D\cap T_t\subset \partial B$.  Among components of $T_t\cap D$ that are essential in $T_t$ let $\beta $ be an innermost one.  Since $T_t$ and $T_w$ cobound a product such a $\beta$ exists.  This $\beta$  has diameter $<4\delta$.  \end{proof}

\begin{definition}  A \emph{C-isotopy} $F:T\times I\to M$ is an isotopy such that for all $t$, $\area(T_t)\le C$.\end{definition}



\begin{lemma} \label{thick isotopy} (\textbf{Thick Isotopy Lemma})  Let $N$ be a closed non Haken hyperbolic 3-manifold with injectivity radius $\delta_0$ and let $\delta<\delta_0/16$.

i)  If $T_0$ and $T_1$ are  isotopic strongly irreducible genus-$g$ Heegaard surfaces that are $8\delta$-locally incompressible, then there exists a computable $C>0$ and  there exists a
$C$-isotopy $F:T\times I\to N$ from $T_0$ to $T_1$ with each $T_t$ is $\delta$-locally incompressible.

ii)  If $T_0$ is weakly reducible and  $8\delta$-locally incompressible, then there exists a computable $C>0$ and a $C$-isotopy from $F:T\times I\to N$ from $T_0$ to a $T_1$ such that each $T_t$ is  $\delta$-locally incompressible and $T_1$ is $8\delta$-locally bi-compressible.\end{lemma}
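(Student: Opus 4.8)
The plan is to combine the area bound from Lemma~\ref{controlled area} with the continuity of the functions $\mecd_0,\mecd_1$ (Lemma~\ref{mecd continuity}) and the avoidance mechanism of Lemma~\ref{avoidance}, running a contradiction argument in the $2$-parameter family. First I would set up the data: given the isotopy from $T_0$ to $T_1$ in part~(i), realize it (after the preliminaries of \S1) by a Heegaard sweepout $\mathcal H_0$ to $\mathcal H_1$ joining the two corresponding Heegaard foliations, and apply Lemma~\ref{controlled area} to extend the boundary data to a $2$-parameter sweepout $\{\Sigma_v\}_{v\in I^2}$ with $\sup_v \mathcal H^2(\Sigma_v)\le C+\epsilon =: C_0$, where $C$ depends only on $g$ and on the (computable) areas of the leaves of $\mathcal H_0,\mathcal H_1$; this is the ``computable $C$''. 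On $\partial I^2$ the surfaces are the Heegaard leaves of $T_0$ and $T_1$ together with the two paths of $1$-complexes, so in particular along $\{0\}\times I$ and $\{1\}\times I$ the middle slice is $T_0$, resp.\ $T_1$, which are $8\delta$-locally incompressible by hypothesis.

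Next I would work on the ``diametric soul from below'' estimate. Consider the region $R\subset I^2$ of parameters $v$ with $\Sigma_v$ a smooth surface (the complement of a neighborhood of the edges carrying $1$-complexes), and on $R$ consider the open set $U_0=\{v: \mecd_0(\Sigma_v)<2\delta\}$ and similarly $U_1$ with $\mecd_1<2\delta$; these are open by Lemma~\ref{mecd continuity}. The core claim is that a slice $\Sigma_v$ which fails to be $\delta$-locally incompressible must lie in $U_0\cup U_1$, and I want to use Lemma~\ref{avoidance} to ``push'' any such bad slice, along the sweepout direction towards the $T_0$ or $T_1$ end, to conclude that either $T_0$ or $T_1$ is itself $8\delta$-bi-compressible (contradicting local incompressibility at scale $8\delta$, after invoking Lemma~\ref{local relations} and the strong irreducibility / non-Haken hypotheses to rule out the weak-reducibility alternative), or that some slice along the way is $8\delta$-bi-compressible. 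The point of Lemma~\ref{avoidance} is exactly that a small $H_0$-compression at parameter $w$ forces either a small $H_0$-compression at the earlier parameter $t$ or $4\delta$-bi-compressibility at $w$; iterating this along a path in $I^2$ from $v$ to the $T_0$-edge, and using that $\mecd_i$ is continuous so the ``first crossing'' of the threshold happens at an interior point, one traps a bi-compressible slice. Then I would feed such a slice into an isotopy that slides it off the bad region — this is where the sweepout must be modified by a diffeomorphism in the saturated family so as to avoid the forbidden set, keeping area $\le C_0$ — and restrict the resulting good $2$-parameter family to the diagonal (or to a path in $I^2$ from $(0,1/2)$ to $(1,1/2)$ through the smooth locus) to extract the desired $1$-parameter $C$-isotopy from $T_0$ to $T_1$ with every slice $\delta$-locally incompressible.

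For part~(ii) the argument is parallel but the endpoint is different: starting from $T_0$ weakly reducible and $8\delta$-locally incompressible, I would build a one-parameter sweepout (again via \S1 and Lemma~\ref{controlled area}, with computable $C$) and run the same avoidance analysis; weak reducibility of $T_0$ means there are compressions to both sides, but local incompressibility at scale $8\delta$ means they are not simultaneously small, so as the sweepout evolves the functions $\mecd_0$ and $\mecd_1$ must both dip below $2\delta$ at some (possibly different) parameters. Tracking these crossings and applying Lemma~\ref{avoidance} on each side, one arrives at a parameter where $\Sigma_t$ is $8\delta$-bi-compressible while every earlier slice stays $\delta$-locally incompressible; truncating the sweepout there gives the required $C$-isotopy from $T_0$ to a $T_1$ that is $8\delta$-locally bi-compressible.

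The main obstacle I expect is the step of converting the ``bad slice is trapped in $U_0\cup U_1$'' combinatorial fact into an honest modification of the sweepout that removes the small compressions while (a) staying within the saturated family so the area bound $C_0$ is preserved, and (b) not preserving it only generically but uniformly across the whole parameter square, including near the singular edges where $\Sigma_v$ degenerates to a $1$-complex. One has to choose the perturbing diffeomorphisms $\psi(v,\cdot)$ compactly supported away from the forbidden balls of Lemma~\ref{avoidance}, patch them continuously over $I^2$, and check that along the edges the $1$-complexes are unaffected so the modified family is still a legitimate sweepout in the sense of \S2 — this continuity/compatibility bookkeeping, rather than any single estimate, is the delicate part.
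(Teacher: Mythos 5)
Your setup for part (i) matches the paper's: Heegaard foliations with computable area bound, the $2$-parameter sweepout from Lemma~\ref{controlled area}, and the bad sets cut out by small $\mecd_0,\mecd_1$. But the way you propose to finish---modifying the sweepout by diffeomorphisms to clear out the bad slices and then restricting to a path---is not what the paper does, and it runs into exactly the compatibility problems you flag at the end. The paper's resolution requires no modification of the sweepout at all; it is a separation argument in the parameter square. Set $G_p=\mecd_0^{-1}([0,p))$ and $R_q=\mecd_1^{-1}([0,q))$. Strong irreducibility of $T_0$ forces $\bar G_{1.5\delta}\cap\bar R_{1.5\delta}=\emptyset$, while Lemmas~\ref{avoidance} and~\ref{local relations} show $\bar G_{1.5\delta}$ contains $I\times 0$ but misses $0\times[1/2,1]$ and $I\times 1$. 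Interpolating a compact smooth $G$ with $\bar G_\delta\subset\inte(G)\subset G\subset G_{1.5\delta}$ and transverse to $\{0,1\}\times I$, elementary planar topology forces some component of $\partial G$ to run from $0\times I$ to $1\times I$, and concatenating arcs of $\partial G$ with arcs of $0\times I$ gives a path from $(0,1/2)$ to the $T_1$ edge on which $\mecd_0\ge\delta$ and $\mecd_1\ge\delta$ throughout. That path \emph{is} the desired $C$-isotopy; your proposal is stuck precisely at the place you say you expect difficulty, and the missing idea is that you should navigate around the bad set in $I^2$ rather than try to remove it from the sweepout.

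For part (ii) there is a more substantial gap: you propose a one-parameter sweepout of $T_0$'s own foliation, but then there is no target for the construction to aim at, and a single Heegaard foliation of $T_0$ gives you nothing to compare against. The paper's key step is to invoke Casson--Gordon: $T_0$ weakly reducible in a non-Haken manifold implies $T_0$ reducible, hence isotopic to a stabilization $T_1$ of a strongly irreducible surface $R$ drawn from a known finite list (so the area bound on $\mH_1$'s leaves is computable), and a stabilization can be foliated so that every nonsingular leaf is $\delta$-bi-compressible. One then runs the same $2$-parameter construction as in (i) between $\mH_0$ and $\mH_1$ but with altered boundary behavior ($I\times 0\cup 1\times I\subset G$ while $(0,1/2)\notin G$), and a more careful path-selection argument---a nested pair of maximal paths $\lambda\subset\sigma$ emanating from $(0,1/2)$---locates where the isotopy must terminate at a $4\delta$- or $1.5\delta$-bi-compressible slice. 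Without the Casson--Gordon reduction there is nothing in your picture to play the role of $T_1$, so the argument cannot get started.
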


\noindent\emph{Proof of i).}  Given a Heegaard surface $T\subset N$, Haken's theory of hierarchies and normal surface theory provides an algorithm for showing that 
each side of $T$ is a handlebody and hence gives an algorithm for constructing a Heegaard foliation with $T$ as a leaf.  Applying these algorithms to 
$T_0$ and $T_1$ produces foliations $\mH_0$ and $\mH_1$ where for $i=0,1, T_i$ is identified with $\mH_i(1/2)$.  Let $C'$ denote an upperbound for the area of any leaf of either $\mH_0$ or $\mH_1$.  
Let $C=\max\{(7(2g-2), C'\}+1$.  Apply Lemma \ref{controlled area} to obtain a sweepout between $\mH_0$ and $\mH_1$.  Denote a (possibly singular) surface in this sweepout by either $T_{(s,t)}$ or $T_v$ where $v\in I\times I$.

Let $G_p=\mecd_0^{-1}([0,p))$ and $R_q=\mecd_1^{-1}([0,q))$.  Now $G_{1.5\delta}$ is an open set containing $[0,1]\times 0$ and its closure is disjoint from $0\times [1/2,1]$ by Lemmas \ref{avoidance} and \ref{local relations}.  A similar statement holds for $R_{1.5\delta}$.  Since $T_0$ is strongly irreducible, it follows that $\bar G_{1.5\delta}\cap \bar R_{1.5\delta}=\emptyset$.  Hence there exists compact smooth submanifolds $G$ and $R$ of $I\times I$  transverse to $\{0,1\}\times I$ such that $\bar G_\delta\subset\inte( G)\subset G\subset G_{1.5\delta}$ and $\bar R_\delta\subset \inte(R)\subset R\subset R_{1.5\delta}$.  Again $R\cap G=\emptyset$ and in particular $G\cap I\times 1=\emptyset$.  Elementary topological considerations imply that   some component of $\partial G$ has endpoints in both $0\times I$ and $1\times I$.  A path in $I\times I$ starting at $(0,1/2)$ giving rise to an isotopy satisfying the conclusion of i) is obtained by concatenating arcs that lie in $\partial G$ and $0\times I$.  

\vskip10pt

\noindent\emph{Proof of ii).}  Since $T_0$ is weakly 
reducible it's reducible \cite{CaGo} and hence $T_0$ is isotopic to a stabilization $T_1$ of some strongly irreducible surface $R$.  Now $T_0$ is explicitly given and $R$ is isotopic to a surface in a known finite set of surfaces \cite{CG}, thus there exist Heegaard foliations $\mH_i$ extending $T_i$, $i=0,1$ such that a computable $C'$ bounds the area of any leaf of $\mH_0$ or $\mH_1$.  Since $T_1$ is a stabilization we can also assume that  for
$t\neq 0,1, \ \mH_1(t)$ is  $\delta$-bi-compressible.  Parametrize $\mH_0$ so that $T_0=\mH_0(1/2)$.

Construct the region $G$ as in i), though here $I\times 0\cup 1\times I\subset G$ but $(0,1/2)\notin G$.  Let $\sigma\subset I\times I$ be the maximal embedded path  starting at $(0, 1/2)$ whose interior is disjoint from $0\times [1/2,1]$ such that 
$\sigma\subset 0\times I \cup \partial G$ and $ \sigma \cap \inte(G)=\emptyset$.  By construction $\mecd_0(T_v)>\delta$ for $v\in \sigma$ and the terminal endpoint of $\sigma$ lies in $I\times 1\cup 0\times I$.  Let $\lambda$ be the maximal subpath of $\sigma$ 
starting at $(0,1/2)$ such that for $v\in \lambda,\ \mecd_1(T_v)\ge \delta$.  If $\lambda$ is a proper subpath of 
$\sigma$, then the desired isotopy is parametrized by $\lambda$.  Indeed, being proper the terminal endpoint $\omega$ is disjoint from $0\times [1/2,1]$.  If $\omega\in 0\times [0,1/2)$, then Lemma \ref{avoidance} implies that $T_\omega$ is $4\delta$-bicompressible.  If $\omega\cap 0\times I=\emptyset$, then $T_\omega$ is $1.5\delta$-bicompressible.  If $\lambda=\sigma$, then its terminal endpoint $\omega\in 0\times (1/2, 1)\cap \partial G$ and hence $\mecd_0(\omega)\le 1.5\delta$.  Lemma \ref{avoidance} now implies that $T_\omega$ is $4\delta$-bicompressible.\qed

\vskip10pt


\section{Weeding out duplications and reducibles}

Given a triangulated non Haken 3-manifold N and $g\in \BN$, Tao Li \cite{Li3}  gives an algorithm to construct a  set $\{S_0, S_1, \cdots, S_n\}$ of genus-$g$ Heegaard surfaces such that any genus-$g$ Heegaard surface is isotopic to one in this set.  In this section,  assuming that the manifold $N$ is hyperbolic, we give an algorithm to eliminate all the reducible splittings and duplications. Here is the idea.  Suppose that $S_0$ is weakly reducible.  The \emph{thick isotopy Lemma} shows that there is an isotopy from $S_0$ to one that is obviously weakly reducible, i.e there exist reducing curves that lie in small 3-balls, via an isotopy is through surfaces of area uniformly bounded above and injectivity radius uniformly bounded below.  (The lower bound being  smaller than the diameter of those small 3-balls.)  Ignoring long fingers and spheres that can be pinched off, one can find a finite net for the totality of such surfaces.  Construct a graph $\mG$ whose verticies are the points of the net and whose edges correspond to surfaces that differ by small perturbations.  Thus if $S_0$ is reducible, then it is in the same component of $\mG$ as an obviously reducible one.  This shows how to eliminate reducible splittings from $\{S_0, \cdots, S_n\}$.  A similar argument shows that if $S_i$ and $S_j$ are irreducible and isotopic then they are in the same component of $\mG$.  We carry out this idea  in the PL setting.  Here smooth isotopies are approximated by PL ones,  the relation of normal isotopy gives a net among surfaces transverse to a triangulation (after certain spheres are pinched off), and pinching and elementary isotopies across   faces of a triangulation give the edges of the graph.

\begin{definition}\label{combinatorial} Let $\Delta$ be a triangulation of the smooth 3-manifold $M$.  By a surface $T$ \emph{transverse} to $\Delta$ we mean that $T$ is transverse to the various skeleta of $\Delta$.  An isotopy $F:T\times [n,m] \to M$ between embedded surfaces $T_n$ and $T_m$ is said to be a \emph{generic $\Delta$-isotopy} if for all but finitely many times $n<t_1<t_2<\cdots<t_n<m$ each $T_t$ is transverse to $\Delta$.  Furthermore, for $\epsilon$ sufficiently small the passage from $T_{t_i-\epsilon}$ to $T_{t_i+\epsilon}$ is one of the following four elementary moves or their inverses.
\vskip 8pt

0) Passing through a vertex: See Figure A

1) Passing through an edge:  See Figure B

2) Passing through a face (two possibilities):  See Figures Ca, Cb
\vskip 8pt

\begin{figure}[ht]
\includegraphics[scale=0.60]{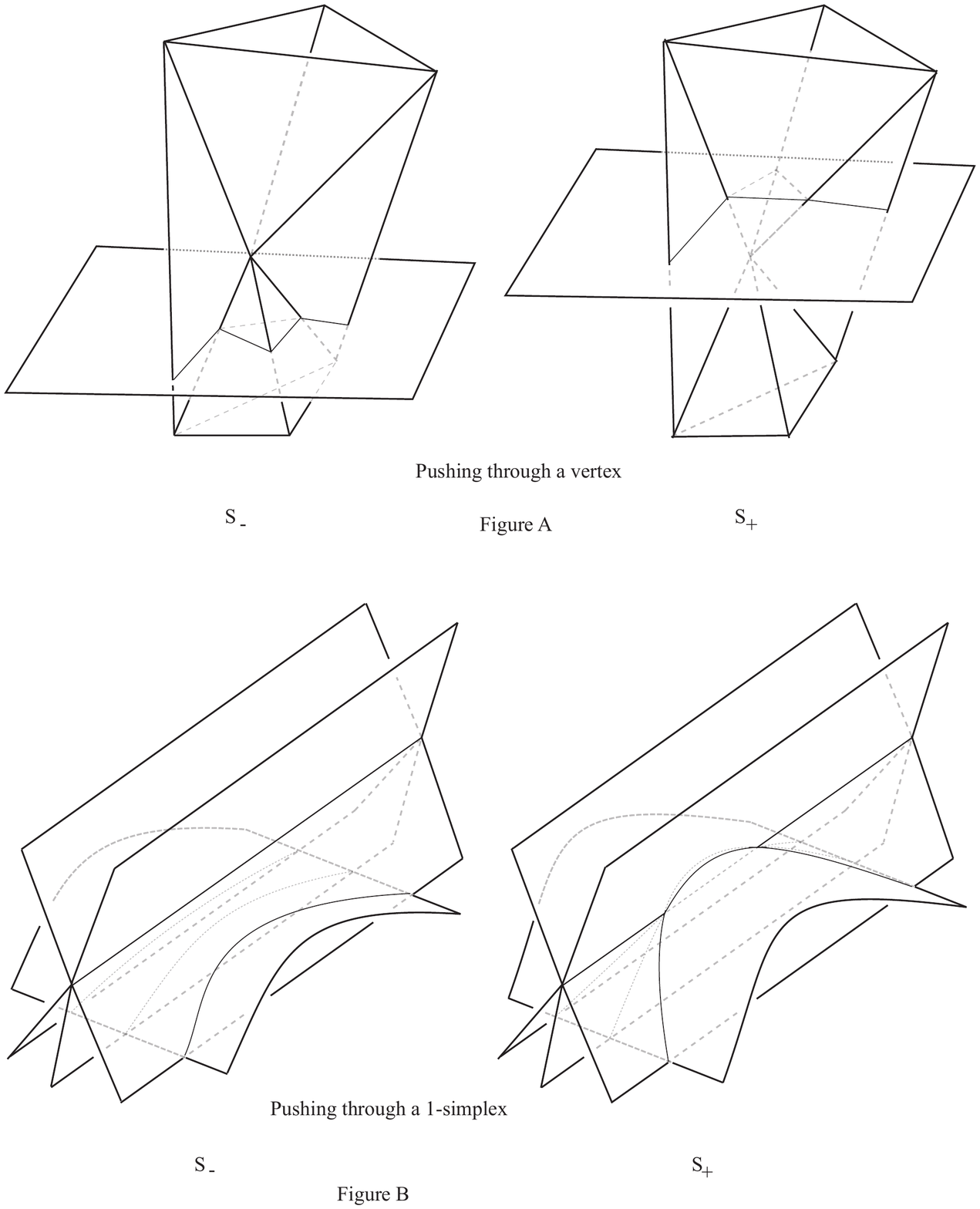}
\end{figure}
\begin{figure}[ht]
\includegraphics[scale=0.60]{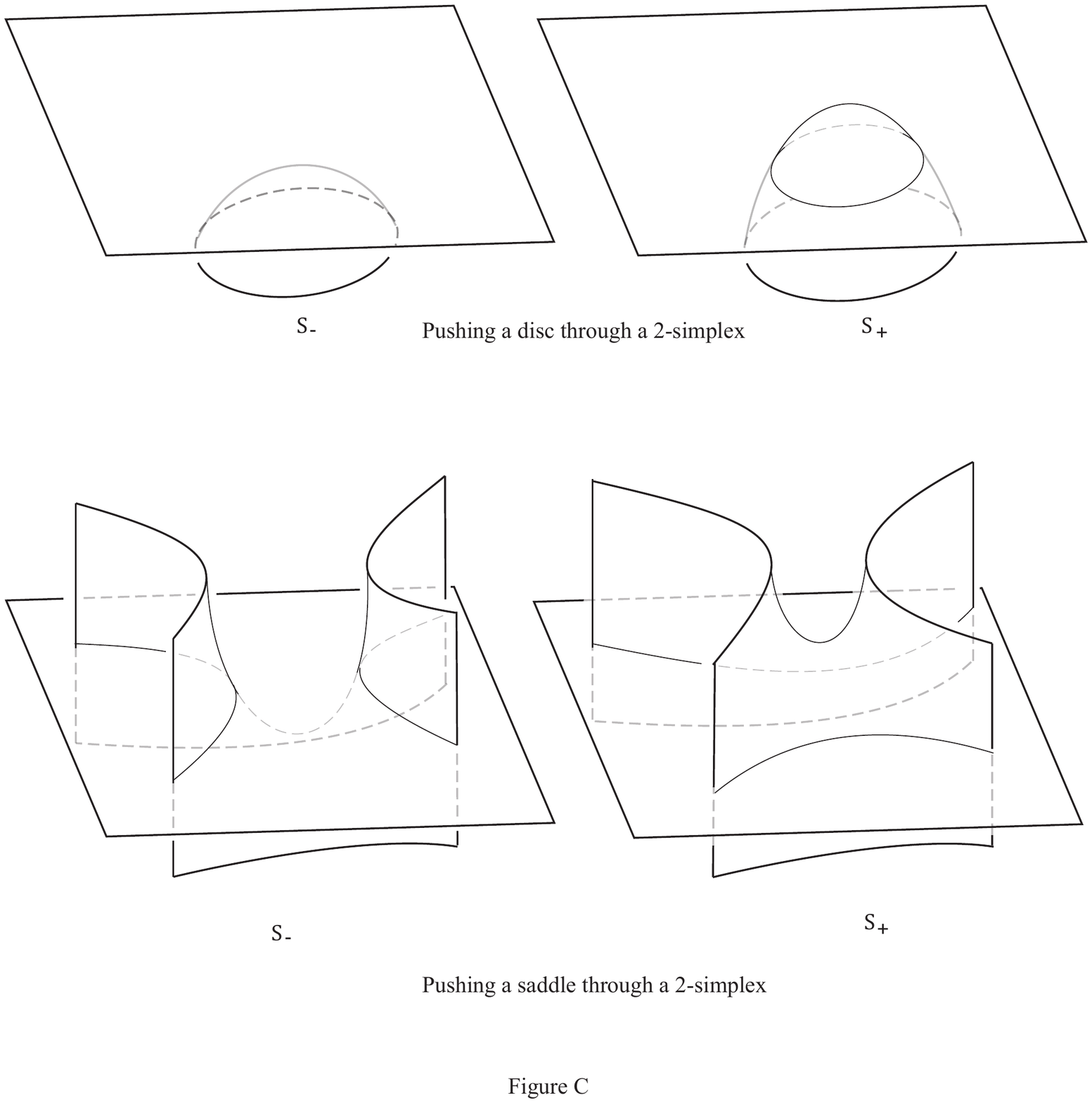}
\end{figure}

The next result follows from a standard perturbation argument.

\begin{lemma} \label{coarea} Let $\Delta$ be a triangulation of the Riemannian 3-manifold $N$ with metric $\rho$, $L>0$ and $\epsilon>0$, then there exists $K(\Delta, L, \epsilon, \rho)>1$ such that if $T$ is a closed embedded surface with $\area(T)<C$, then $T$ is isotopic to a surface $T'$ such that $|T'\cap \Delta^1|<KC$ and the diameter of the track of any point of the isotopy is at most $\epsilon$.  

If $F:T\times [0,1]\to M$ is a $C$-isotopy between surfaces  $T_0$ and $T_1$ that are transverse to $\Delta$ of weight at most $L$, then there exists a generic $K(C+1)$-$\Delta$-isotopy $G$ from $T_0$ to $T_1$ such that for all $x\in T$ and $t\in [0,1]$, $d(G(x,t), F(x,t))<\epsilon$.  \qed\end{lemma}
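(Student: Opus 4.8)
The plan is to prove Lemma \ref{coarea} in two separate parts, each by a routine transversality-and-approximation argument, keeping careful track of the quantitative control.

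\textbf{Part 1: the static statement.} First I would invoke a triangulation-dependent coarea inequality. Fix a smooth triangulation $\Delta$ of $(N,\rho)$. For the first assertion, I start with an embedded surface $T$ with $\area(T)<C$, and I want to isotope it, moving points only a small amount, so that its intersection with the $1$-skeleton $\Delta^1$ is controlled. The key geometric input is that near any edge $e$ of $\Delta$, one may choose a tubular-neighborhood parametrization and apply the coarea formula: if $T$ meets a neighborhood of $e$, then sweeping $T$ by the family of ``distance-to-$e$'' level sets shows that for generic radii the number of intersection points of $T$ with a parallel copy of $e$ is bounded by a constant (depending on $\Delta$ and $\rho$) times the area of $T$ in that neighborhood. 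Summing over the finitely many edges of $\Delta$ produces the constant $K=K(\Delta,\rho)$ and a surface $T'$, isotopic to $T$ by an isotopy supported in small neighborhoods of the $1$-skeleton (hence with tracks of diameter $<\epsilon$ once the neighborhoods are taken small enough, which is where the $\epsilon$-dependence of $K$ enters), with $|T'\cap\Delta^1|<KC$. A small further general-position perturbation, again with tiny tracks, makes $T'$ transverse to all skeleta of $\Delta$.

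\textbf{Part 2: the isotopy statement.} Now suppose $F:T\times[0,1]\to M$ is a $C$-isotopy between $T_0,T_1$ that are already transverse to $\Delta$ with weight $\le L$. Here I would use a standard genericity argument for families. View $F$ as a map $T\times[0,1]\to M$ and perturb it, rel the two ends $T_0,T_1$ and by less than $\epsilon$ in the $C^0$ norm, so that the track map is in general position with respect to $\Delta$. Concretely: make the composite $T\times[0,1]\to M\to$ (local coordinates near each simplex) transverse to the skeleta, so that (i) for all but finitely many $t$, $T_t$ is transverse to $\Delta$; (ii) at the finitely many exceptional times $t_1<\cdots<t_n$ the only failure of transversality is a single tangency, of the simplest type, with exactly one simplex — these are precisely the elementary moves $0),1),2)$ of Definition \ref{combinatorial}; and (iii) the perturbation is $C^0$-small, so each $T_t$ stays area-close to the original and in particular $\area(T_t)<C+1$ throughout. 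Applying Part 1 ``with parameters'' — i.e. fibrewise in $t$, but now only needing the conclusion on weights rather than a new isotopy — gives $|T_t\cap\Delta^1|<K(C+1)$ for the transverse slices, so the resulting family is a generic $K(C+1)$-$\Delta$-isotopy $G$, and by construction $d(G(x,t),F(x,t))<\epsilon$ for all $x,t$.

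\textbf{Main obstacle.} The genuinely delicate point is (ii): verifying that a generic small perturbation of a $1$-parameter family of embedded surfaces meets a $2$-complex only through the listed elementary moves, and in particular that no two elementary moves occur at the same time and that no more degenerate singularity (tangency with an edge or vertex of higher order, or simultaneous tangencies) survives. This is the usual multijet-transversality / Cerf-theory bookkeeping: one must check that the ``bad'' configurations form a stratified subset of positive codimension in the relevant jet space, so a generic path avoids them, while the allowed elementary moves are exactly the codimension-one strata. The paper signals that this is ``a standard perturbation argument,'' so in the write-up I would state the stratification claim, note that passing through a vertex, an edge, or a face (in the two combinatorial ways) exhausts the codimension-one strata, cite standard jet-transversality, and leave the stratum-counting to the reader.
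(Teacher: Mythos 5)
Your \textbf{Part 1} is fine: the tubular-neighborhood coarea argument near each edge, an averaging/Markov step to pick a nearby good position, and a further small general-position perturbation is the right mechanism, and correctly accounts for the dependence of $K$ on $\epsilon$ (less room to average means a larger constant) and on $L$ (so that the fixed endpoints $T_0,T_1$, of weight $\le L$, automatically satisfy $L\le K\le K(C+1)$). The paper gives no proof of the lemma (it simply says it follows from a standard perturbation argument), so there is no paper proof to compare against; this is a blind fill-in, and your Part~1 is a reasonable one.

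\textbf{Part 2 has a real gap}, however, at the sentence ``Applying Part 1 fibrewise\dots gives $|T_t\cap\Delta^1|<K(C+1)$ for the transverse slices.'' This does not follow. A Cerf-generic ($C^0$-small) perturbation of the family $F$ makes each slice transverse to $\Delta$ and arranges that only elementary moves occur, but it does \emph{not} bound the weight of those slices: a surface of area $<C$ that is transverse to $\Delta^1$ can intersect $\Delta^1$ in arbitrarily many points (think of a thin tube that crosses an edge many times), and a generic small perturbation will not decrease that number. Part 1 only asserts that each slice $T_t$ is \emph{isotopic to} some surface of weight $<K(C+1)$; it says nothing about the weight of $T_t$ itself, or of a Cerf-generic perturbation of it. Saying you only ``need the conclusion on weights rather than a new isotopy'' misreads Part 1: the weight bound in Part 1 is a property of the new, isotoped surface, not of the input.

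What Part 2 actually requires is a \emph{uniform-in-$t$} weight bound achieved by a \emph{single} isotopy $\epsilon$-close to $F$ and rel the endpoints. The coarea mechanism from Part 1 gives, for each fixed $t$, that most perturbation parameters $v$ yield $|T_t+v\cap\Delta^1|<K(C+1)$; averaging over both $t$ and $v$ gives an $L^1$-in-$t$ bound for most $v$, but not the $\sup_t$ bound the lemma demands. And choosing $v$ separately for each $t$ does not produce a continuous family: you would need a continuous selection $v(t)$ with $v(0)=v(1)=0$ avoiding the closed ``bad'' set $\{(t,v):|T_t+v\cap\Delta^1|\ge K(C+1)\}$, and small measure of that set does not rule out its being a wall separating $(0,0)$ from $(1,0)$. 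Your identified ``main obstacle'' (the Cerf stratification bookkeeping for (ii)) is the benign part; the genuine difficulty is controlling the weight of all intermediate slices simultaneously, and that step is missing from your proposal. To repair it you would need an argument that works at the level of the whole family --- for instance, a parametrized coarea/selection argument, or a reparametrize-and-discretize scheme together with a lemma saying that two $C^1$-close transverse surfaces of controlled weight are joined by a small isotopy whose slices have controlled weight --- and none of these is routine enough to wave through as ``apply Part 1 with parameters.''
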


The transverse surface $T$ is said to be obtained by \emph{tubing} the transverse surfaces $P$ and $Q$ if  there exists a 3-simplex $\sigma$ and an embedded $D^2\times I \subset \inte(\sigma)$ such that $D^2\times 0\subset P$,  $D^2\times 1\subset Q$ and $T= (P\cup Q \cup \partial D^2\times I)\setminus (\inte(D^2)\times \{0,1\})$.  Two transverse surfaces $T$ and $T'$  \emph{differ by a pinch} if $T'$ is obtained by tubing $T$ and a 2-sphere or vice versa.  

A  \emph{pinched isotopy} $F$ from $T_0$ to $T_1$ consists of $0=n_1<\cdots<n_k=1$ and isotopies $f_i:T\times 
[n_i, n_{i+1}]$, $i=1, 2, \cdots, n_{k-1}$ such that $f_{n_1}(T,0)=T_0, f_{n_{k-1}}(T,1)=T_1$ and for all $i<k-1$ the 
surfaces $f_i(T, n_{i+1})$ and $f_{i+1}(T, n_{i+1})$ differ by a pinch.  \end{definition}





\begin{definition}  \label{crude} Let $\Delta$ be a triangulation of the 3-manifold $M$.  We say that $T$ is \emph{crudely almost normal} if $T$ is transverse to $\Delta$ and satisfies the following additional properties.  If $\tau$ is a 2-simplex, then no component of $T\cap \tau$ is a simple closed curve and if $\sigma$ is a 3-simplex, then each component of $T\cap\sigma$ is either a disc or an unknotted annulus.  Also there is at most one annulus component of $T\cap\sigma$. If all the components of intersection with 3-simplices are discs, then $T$ is said to be \emph{crudely normal}.   The $ \Delta$-transverse surfaces $T$ and $T'$ are said to be \emph{normally isotopic} if they are isotopic through a path of $\Delta$-transverse surfaces. Define the \emph{weight} of $T$ to be $|T\cap \Delta^1|$.   \end{definition}

\begin{lemma}  \label{crude finite} Let $\Delta$ be a triangulation of the 3-manifold $M$. Given $N\in \BN$, there are only finitely many normal isotopy classes of crudely normal and crudely almost normal surfaces of weight at most $N$.  \qed\end{lemma}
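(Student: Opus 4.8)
\emph{Proof proposal.} The plan is to show that every crudely normal or crudely almost normal surface $T$ with $\weightbound$, i.e. with weight $|T\cap\Delta^1|\le N$, is normally isotopic to one in a \emph{standard position} recorded by a finite amount of combinatorial data, and that this data takes only finitely many values; the lemma follows at once. The argument proceeds skeleton by skeleton, with the isotopies supported in successively smaller neighborhoods of $\Delta^1$, then $\Delta^2$, then the open $3$-simplices, so that the pieces assemble to a single normal isotopy.

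First I would normalize along $\Delta^1$. Since $\Delta$ has finitely many edges and $\sum_{e}|T\cap e|=|T\cap\Delta^1|\le N$, the ``intersection vector'' $e\mapsto|T\cap e|$ takes only finitely many values; fix one. Because every surface in a normal isotopy is transverse to $\Delta$ (in particular misses $\Delta^0$ and meets each edge transversally) and $T$ is embedded, the number of points of $T\cap e$ and their linear order along $e$ are normal isotopy invariants, and after an initial normal isotopy supported near $\Delta^1$ we may take $T\cap e$ to be a standard (say equally spaced) point set for each $e$. Next, for each $2$-simplex $\tau$, crude normality forces $T\cap\tau$ to be a disjoint union of properly embedded arcs with endpoints the standardized points on $\partial\tau$. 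Up to isotopy of $\tau$ fixing $\partial\tau$ setwise and not moving endpoints past vertices, such an arc system is classified by the induced non-crossing matching of its $\le N$ boundary points, of which there are finitely many; and a given matching is realized by a unique arc system up to such isotopy. Promoting this isotopy of arc systems to an ambient isotopy of a bicollar $\tau\times(-\epsilon,\epsilon)$, identity outside, we arrange (after a normal isotopy supported near $\Delta^2$) that $T\cap\Delta^2$ is one of finitely many standard patterns.

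It remains to standardize $T\cap\sigma$ for each $3$-simplex $\sigma$. Now $T\cap\partial\sigma$ is a fixed finite system of disjoint simple closed curves on $\partial\sigma\cong S^2$. By hypothesis each component of $T\cap\sigma$ is a disc (with one of these curves as boundary) or the single unknotted annulus (with two of them as boundary); there are finitely many ways to partition the curve system into the disc boundaries and the (at most one) pair of annulus boundaries. Given the partition, a properly embedded disc in the ball $\sigma$ with prescribed boundary curve is unique up to ambient isotopy rel $\partial$ (Schoenflies: the disc together with a complementary disc on $\partial\sigma$ is a $2$-sphere bounding a ball on each side), and, cutting along the discs one at a time, a disjoint union of such discs is likewise unique up to isotopy rel $\partial$; an unknotted annulus with prescribed boundary is similarly unique up to isotopy rel $\partial$. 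Hence a further normal isotopy supported in $\inte(\sigma)$ puts $T\cap\sigma$ in standard form, and these isotopies over distinct $3$-simplices have disjoint supports and fix $\Delta^2$, so they combine into a global normal isotopy.

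Assembling, $T$ is normally isotopic to a surface completely determined by the intersection vector, the matchings on the $2$-simplices, and the partition data on the $3$-simplices, and these together take only finitely many values, giving the claimed finiteness. I expect the main obstacle to be the $3$-simplex step: one must pin down the precise meaning of ``unknotted annulus'' and prove the corresponding uniqueness-up-to-isotopy statement, and — throughout — verify that each isotopy produced by isotopy extension can be taken through $\Delta$-transverse surfaces and made to agree with the previously constructed isotopies on shared faces. The shrinking-support, skeleton-by-skeleton organization is what makes this compatibility bookkeeping manageable.
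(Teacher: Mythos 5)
The paper states this lemma with no argument at all (the statement itself carries the \qed), treating it as an immediate consequence of the combinatorial nature of crude (almost) normality; your skeleton-by-skeleton proof is the standard way to justify that assessment, and it is essentially correct. The structure is right: fix the weight vector along $\Delta^1$ (finitely many choices, since there are finitely many edges and the total is at most $N$); note that crude normality forbids closed components of $T\cap\tau$, so $T\cap\tau$ is an arc system classified by a non-crossing matching of at most $N$ endpoints; then inside each $3$-simplex, the collection of disc pieces with prescribed boundary is unique up to isotopy rel $\partial$ by an innermost-curve/Schoenflies induction, and the at-most-one unknotted annulus contributes only a finite amount of additional data. Each stage of the isotopy can be kept through $\Delta$-transverse surfaces by supporting it in a neighborhood of the relevant open cell that is disjoint from the lower skeleta already standardized, which is exactly the ``shrinking support'' bookkeeping you flag.

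Two small remarks. First, the one genuinely nontrivial point is the one you yourself single out: the uniqueness-up-to-isotopy-rel-$\partial$ of the annulus piece hinges on what ``unknotted'' means; the paper does not define it, and in the almost-normal literature the usual convention is that ``unknotted'' \emph{means} isotopic rel $\partial$, in the complement of the disc pieces, to a vertical annulus, so that uniqueness is built into the hypothesis rather than something to prove. With that reading your step $3$ closes with no further work. Second, you do not actually need to realize a canonical representative for each combinatorial type to conclude finiteness: it suffices that the map from normal isotopy classes (of weight $\le N$) to the finite set of data (weight vector, per-face matching, per-tetrahedron partition into disc/annulus boundaries) is well defined and injective, which is a slightly lighter statement than constructing standard positions and avoids the ambient isotopy-extension bookkeeping almost entirely. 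Either route is fine; the paper's implicit view is simply that the lemma follows at once from finiteness of this combinatorial data.
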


\begin{definition} A \emph{generic crudely normal} isotopy between two crudely almost normal surfaces $T_0$ and $T_1$ in a manifold with triangulation $\Delta$ is a generic $\Delta$-isotopy $F:T\times [0,1]\to N$ such that away from the finitely many non transverse points, the surfaces $F(T,t)$ are crudely almost normal.  A \emph{$C_1-\Delta$-isotopy} is one whose transverse surfaces all have weight $\le C_1$ with respect to $\Delta$.\end{definition}

The following lemma more or less says that provided the diameters of the simplices of $\Delta$ are sufficiently small, a generic pinched isotopy between crudely normal surfaces through $\delta$-locally incompressible surfaces  can be replaced by one whose generic interpolating surfaces are crudely normal without increasing the uniform upper bound on the weights of the interpolating surfaces.  

\begin{lemma}  \label{crude isotopy}  Let $M$ be a closed irreducible Riemannian 3-manifold with injectivity radius $>\delta_0$.  Let $\delta<\delta_0/16$ and $\Delta$ a triangulation on M such that if $\kappa\in\Delta$, then $st^2(\kappa)$ and $st(\kappa)$ are 3-balls of diameter $<\delta$.  Let $G$ be a generic pinched $C_1$-$\Delta$-isotopy between the crudely normal genus-$g$ surfaces $T_0$ to $T_1$ such that for each $t,\ G(T,t):=T_t$ is $\delta$-locally incompressible.  (At the pinch times there are two possibilities for $T_t$.)  Then there exists a generic pinched crudely normal $C_1$-$\Delta$-isotopy $H$ from $T_0$ to $T_1$. \end{lemma}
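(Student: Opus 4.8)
The plan is to convert the given pinched $C_1$-$\Delta$-isotopy $G$ into a crudely normal one by analyzing and repairing the non-normal features of the interpolating surfaces, using the $\delta$-local incompressibility hypothesis to guarantee that the repairs can be done locally and without increasing weight. First I would recall the two ways $G(T,t)$ can fail to be crudely almost normal: (a) some component of $T_t\cap\tau$ is a simple closed curve for a 2-simplex $\tau$, or (b) some component of $T_t\cap\sigma$ for a 3-simplex $\sigma$ is neither a disc nor an unknotted annulus, or there are two or more annulus components. Since $G$ is generic, between consecutive elementary moves the combinatorial type of $T_t$ relative to $\Delta$ is constant, so it suffices to fix the finitely many ``blocks'' of the isotopy one at a time, and within each block the surface is a fixed transverse surface up to normal isotopy.

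The key step is the local repair. Consider a closed-curve component $c$ of $T_t\cap\tau$, innermost on $\tau$, bounding a disc $E\subset\tau$. Because $st^2(\kappa)$ and $st(\kappa)$ have diameter $<\delta$ for every simplex $\kappa$, the curve $c$ together with any surgery disc lies in a $\delta$-ball, so $c$ bounds a disc in a complementary region of $T_t$ (namely push $E$ slightly off $\tau$ to one side) — and by $\delta$-local incompressibility this curve must be \emph{inessential} in $T_t$. Hence $c$ bounds a disc $D\subset T_t$ of small diameter, and we may isotope $T_t$ across the ball bounded by $D\cup E$ to remove $c$ (and possibly further curves), strictly decreasing $|T_t\cap\tau|$ and not increasing weight $|T_t\cap\Delta^1|$. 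The same philosophy handles bad 3-simplex pieces: a knotted or non-disc, non-annulus component of $T_t\cap\sigma$, or a superfluous annulus, produces a small essential compressing curve unless it can be normalized; $\delta$-local incompressibility forbids the essential case, so the piece can be simplified inside $\sigma$. Iterating (the weight $|T_t\cap\Delta^1|$ is a non-negative integer bounded by $C_1$, and the secondary complexity $\sum_\tau |T_t\cap\tau|$ strictly drops) terminates in a crudely almost normal surface; if at the end all 3-simplex pieces are discs we get crudely normal. One must also check the pinch times: a pinch tubes $T_t$ with a 2-sphere inside a single $3$-simplex, which is already a local, weight-controlled operation compatible with normalization, and the two surfaces differing by a pinch can be normalized compatibly.

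Then I would assemble these local repairs into a global isotopy $H$. The repairs performed on the surface in one block must be ``carried along'' consistently: after normalizing $G(T,t)$ for $t$ in block $k$, the endpoints of that block no longer match the (un-normalized) surfaces at the ends of the adjacent blocks, so I insert short normal isotopies, supported in small balls, that realize the normalization as $t$ varies — i.e. do the compressions one at a time, each across a small ball, each realizable by a generic $\Delta$-isotopy passing through the four elementary moves. Since every curve/piece being removed is inessential and of diameter $<\delta$, each such insertion stays below the weight bound $C_1$ (in fact weight is non-increasing under these insertions) and keeps every intermediate surface $\delta$-locally incompressible, because we only ever compress along \emph{inessential} curves, which cannot create a new essential small compression. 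Concatenating the normalized blocks with the interpolating normalizing isotopies yields the desired generic pinched crudely normal $C_1$-$\Delta$-isotopy $H$ from $T_0$ to $T_1$ (recall $T_0, T_1$ were already crudely normal, so no repair is needed at the ends).

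The main obstacle I expect is bookkeeping the interaction between the pinching operations and the normalization, together with verifying that the local repairs can genuinely be threaded through \emph{as an isotopy} (not merely surface-by-surface) while staying generic, weight-bounded, and $\delta$-locally incompressible throughout; in particular one must be careful that simplifying a bad 3-simplex piece does not transiently push weight above $C_1$ or create a large essential curve as an intermediate stage. The hypothesis that $st(\kappa)$ and $st^2(\kappa)$ are $3$-balls of diameter $<\delta$ is exactly what is needed to keep all the surgery discs and their isotopy tracks inside $\delta$-balls, so that $\delta$-local incompressibility applies at every stage; making this quantitative matching precise is the technical heart of the argument.
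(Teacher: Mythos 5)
The overall idea — use $\delta$-local incompressibility to conclude that the closed curves $T_t\cap\partial\sigma$ are inessential in $T_t$, and then compress them away to reach a crudely normal surface — is the right one, and it is the same idea the paper uses. But your execution departs from the paper's in a way that creates two genuine gaps, and a third step that is the technical heart of the argument is left unaddressed.

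First, you assert that an inessential closed curve $c\subset T_t\cap\tau$ ``bounds a disc $D\subset T_t$ of small diameter.'' This does not follow: $c$ has diameter $<\delta$, and $\delta$-local incompressibility forces $c$ to be \emph{inessential}, but the subdisc of $T_t$ that $c$ cuts off may be large (think of a torus meeting $\tau$ in a curve whose inessential side is most of the surface). Second, and as a consequence, your ``repair'' — an ambient isotopy pushing $D$ across the ball bounded by $D\cup E$ to the small disc $E\subset\tau$ — need not stay weight-bounded: during that isotopy the moving surface sweeps the ball, and its transient intersections with $\Delta^1$ can exceed $C_1$ even though the final surface has strictly smaller weight. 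The paper sidesteps both problems by \emph{not} isotoping $S_t$ to a crudely normal surface. Instead, for each transverse $t$ it \emph{defines} a new surface $S'_t$ by surgering $S_t$ along small compressing discs near each component of $S_t\cap\partial\sigma$ (all inside the relevant $\sigma$, hence disjoint from $\Delta^1$, hence weight non-increasing), discards the resulting $2$-sphere components, and calls the genus-$g$ remainder $T_t$. It then argues that the \emph{family} $t\mapsto T_t$, which is locally constant up to normal isotopy between critical times, is itself a generic pinched crudely normal $C_1$-$\Delta$-isotopy — the ``pinch'' relation in the definition of pinched isotopy is exactly what absorbs the discarded $2$-spheres. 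Third, the claim that adjacent normalized blocks can be joined by short normal isotopies ``supported in small balls'' is precisely where all the content lives: at an elementary-move time one must check that the compressed surfaces $T_-$ and $T_+$ differ by a normal isotopy, an elementary move of the same type, or a pinch. This requires a case-by-case analysis (one case per elementary move type, with several subcases for the saddle move in a face), in one subcase of which $\delta$-local incompressibility is invoked again to rule out a homologically essential small curve. Your proposal names this as the expected obstacle but offers no argument; without it the proof is not complete.
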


\noindent\emph{Proof.}  If $S_t$ is transverse to $\Delta$, then let $T_t$ be the crudely normal surface obtained as follows.  First, let $S'_t$ be the surface obtained by compressing $S_t$ near each component of $S_t\cap \partial \sigma$ for every 3-simplex $\sigma$ and then deleting the components disjoint from $\Delta^2$.   I.e. if $N(\Delta^2)$ is a small regular neighborhood of $\Delta^2$, then $S'_t\cap N(\Delta^2)=S_t\cap N(\Delta^2)$ and  for all every 3-simplex $\sigma$ each component of $S'_t\cap \sigma$ is a disc.  Since $S_t$ is $\delta$-locally incompressible, each component of $S_t\cap \partial \sigma$ is inessential in $S_t$.  It follows that $S'_t$ is a union of 2-spheres and a single component $T_t$ of genus-$g$.  Note that $T_t$ is crudely normal and up to normal isotopy $T_t$ is locally constant in $t$.  If $t$ is a pinch time, the result of  this construction is independent of the two possible choices for $S_t$. Finally  the local incompressibility condition implies that if $\sigma$ is a 3-simplex, then  any  closed curve in $S'_t\cap \sigma$   is inessential in $S'_t$.  

Away from a small neighborhood of the non transverse times, we let $H(S_t) = T_t$.  To complete the proof, it suffices to show that if $S_s$ is not transverse to $\Delta$ and $\epsilon$ is sufficiently small, then there is a generic pinched crudely normal $C_1-\Delta$-isotopy from $T_{s-\epsilon}$ to $T_{s+\epsilon}$.  We abuse notation by letting $S_{s-\epsilon}$ (resp. $S'_{s-\epsilon}$, $T_{s-\epsilon}$) be denoted $S_-$ (resp. $S'_-, T_-)$ with analogous notation  for $S_{s+\epsilon}$ (resp. $S'_{s+\epsilon}$, $T_{s+\epsilon})$.  
\vskip 8pt

\noindent\emph{Case 0:  $S_s$ passes through a vertex.}
\vskip 8pt

\noindent\emph{Proof of Case 0.}  Let $v$ denote the vertex. Let $\sigma$ be a 3-simplex having $v$ as a vertex.  We say an edge of $\sigma$ with vertex $v$ is \emph{up} if it lies to the $S_+$ side of $S$ and \emph{down} otherwise.  If $\sigma$ has three down (resp. up) edges, then $S_+\cap\sigma$ differs from $S_-\cap\sigma$ by the loss (resp. gain) of a normal triangle.  If it has one or two down edges, then exactly one component is non normally isotoped in the passage from $S_-$ to $S_+$.   It follows that $S'_+$ differs from $S'_-$ by an elementary move of type 0) and either $T_+$ is normally isotopic to $T_-$ or they differ by a move of type 0).\qed
\vskip 8pt

\noindent\emph{Case 1: $S_s$ is tangent to a 1-simplex.}  
\vskip 8pt

\noindent\emph{Proof of Case 1.}  Let $\eta$ denote the 1-simplex.  We can assume that the isotopy from $S_-$ to $S_+$ is as in Figure 4 as opposed to the inverse operation.  Let $\kappa_1, \cdots, \kappa_m$ denote the 2-simplices containing $\eta$.   We say that $\kappa_i$ is a \emph{down} simplex if some component of $\kappa_i\cap S_-$  splits into two components of $\kappa_i\cap S_+$, otherwise it is an \emph{up simplex}.   By perturbing the isotopy slightly, at a cost of creating moves of type 2b), we can assume that there exists a unique down simplex. It follows that $S'_+$ is obtained from $S'_-$ by a type 1) move and that either $T_+$ is normally isotopic to $T_-$ or they differ by a type 1) move.\qed
\vskip 8pt

\noindent\emph{Case 2a:  $S_s$ intersects a 2-simplex   locally at a point.}
\vskip 8pt

\noindent\emph{Proof of Case 2a.}  Let $\tau$ denote the 2-simplex.  Again we can assume that the isotopy is as in Figure 5 as opposed to the inverse one.  Up to normal isotopy $S'_+$ is equal to the union of $S'_-$ and a  2-sphere that intersects $\Delta^2$ in a single circle and $T_-=T_+$. \qed
\vskip 8pt

\noindent\emph{Case 2b:  $S_s$ intersects a 2-simplex at a saddle.}  
\vskip8pt

\noindent\emph{Proof of Case 2b.}  Let $\tau$ denote the 2-simplex, $x\in \tau$ the point of tangency and $\sigma_1$ and $\sigma_2$ the 3-simplices that contain $\tau$.  We can assume that $\sigma_1$ (resp. $\sigma_2$) lies \emph{below} (resp. \emph{above}) $x$, i.e. if a normal vector to $S_s$ based at $x$ points from $\sigma_1$ to $\sigma_2$, then the isotopy locally moves $S_s$ in that direction.  Let $\gamma$ be the component of $S_s\cap \tau$ that contains $x$.  We have various subcases.  
\vskip 8pt

\noindent\emph{Case 2bi:  $\gamma\cap \Delta^1=\emptyset$.}  
\vskip 8pt

\noindent\emph{Proof.}  By replacing the isotopy by the reverse if necessary, it suffices to consider the case that two $S^1$ components of $S_-\cap \tau$ transform to one component of $S_+\cap \tau$.  Up to normal isotopy $S'_-$  is the union of $S'_+$ and a $S^2$ component that intersects $\Delta^2$ in a single circle and $T_-=T_+$.  \qed
\vskip 8pt

\noindent\emph{Case 2bii: $|\gamma\cap \Delta^1|=2$.}  We can assume that an arc and an $S^1$ component of $S_-\cap \tau$ coalesce to an arc component of $S_+$ and therefore have the same conclusion as Case 2bi.\qed
\vskip 8pt

\noindent\emph{Case 2biii: $|\gamma\cap \Delta^1|=4$.}  Here two arc components $p_1, q_1$ of $S_-\cap \tau$ transform to two other arc components $p_2, q_2$ of $S_+\cap \tau$.  Observe that $p_1, q_1$ belong to the same component of $S_-\cap \partial \sigma_1$ if and only if $p_2, q_2$ belong to different components of $S_+\cap \partial \sigma_1$.  A similar fact holds for $\partial \sigma_2$.

We next show that it suffices to consider the case that $p_1,q_1$ are contained in  different components of both $S_-\cap \partial \sigma_1$ and $S_-\cap \partial \sigma_2$.  If they belong to the same components, then $p_2,q_2$ are contained in different components of both $S_+\cap \partial \sigma_1$ and $S_+\cap \partial \sigma_2$, so the reverse isotopy from $S_+$ to $S_- $ has the desired feature.  If $p_1,q_1$ belong to different components $p,q$ of $S_-\cap \partial \sigma_1$ but the same component of $S_-\cap \partial \sigma_2$, then $p\subset \sigma$ and is homologically essential, contradicting our local incompressibility condition.  

Now assume that $p_1,q_1$ are contained in  different components of both $S_-\cap \partial \sigma_1$ and $S_-\cap \partial \sigma_2$.  Let $P$ be the 
component  of $S'_-$ containing $p_1$ and $Q$ the component containing $q_1$.  Again if $P=Q$, then $p$ is homologically essential in $S_-$ contradicting $\delta$-local incompressibility.   Therefore one of them, say $Q$ is a 2-sphere.  If $P$ is also a 2-sphere, then $T_-$ is normally isotopic to $T_+$.  If $P=T_-$, 
then let $T'$ be the crudely almost normal surface obtained by tubing $P$ and $Q$, i.e. $T'$ is pinch equivalent to $T_-$.  Finally $T_+$ is obtained from 
$T'$ by a type 2b) move and the proof of the lemma is complete.\qed
\vskip 10pt


\noindent\emph{Proof of Theorem \ref{main}}   The initial data for $N$ is a totally geodesic triangulation $\Delta_1$.  (E.g. see \cite{Ma}.) Use it to find a lower bound $\delta_1$ for the injectivity radius of $N$.  By \cite {CG} there exists a computable $C(N)$ bounding the genus of any irreducible Heegaard surface.  Now fix $g\le C(N)$ and apply  Tao Li's algorithm \cite {Li3} (or \cite {CG}), to construct a set $\{S_1, \cdots, S_n\}$ of Heegaard surfaces  such that any Heegaard surface of genus-$g$ is isotopic to one of these surfaces. By construction the surfaces from \cite {Li3} are almost normal with respect to $\Delta_1$.  Next, pass to the second barycentric subdivision $\Delta_2$ of $\Delta_1$.  It follows that after  a small isotopy of  $\Delta_2$, each $S_i$ is normal to $\Delta_2$ and  $st(\kappa) $ and $st^2(\kappa)$ are closed 3-balls for each simplex $\kappa\in \Delta_2$.  Let $\delta_2\le \delta_1$ be such that any essential curve of any $S_i$ has diameter $>\delta_2$.   Let $\delta_3\le \delta_2$ be such that if $X\subset N $ and $\diam(X)\le \delta_3$, then $X\subset st(\kappa)$ some $\kappa\in \Delta_2$.  Let $\delta_4\le \delta_3/100$ and  subdivide $\Delta_2$ to $\Delta_3$ so that $\diam(\kappa)\le \delta_4$ for all $\kappa\in \Delta_3$ and so that each $S_i$ is normal.  Next let $L$ be the maximal $\Delta_3$ weight of all the $S_i$'s, $C=\max\{\area(S_1), \cdots, \area(S_n), 7(2g-2)\}$ and  $K=K(\Delta_3, \delta_4/2, L)$ as in Lemma \ref{coarea}.   Note that $K\ge L$.  


Construct a graph $\mG$ whose vertices are normal isotopy classes of $ \Delta_3$ crudely almost normal surfaces of weight at most $K(C+1)$.  Connect two vertices by an edge if they differ either by a pinch or a move of type 0), 1), 2a) or 2b) or their inverses.  Call a vertex of $\mG$ corresponding to the surface $T$ \emph{obviously weakly reducible} with respect to $\Delta_2$ if there exists (possibly equal) $\kappa_0,\kappa_1\in \Delta_2$  and essential curves $\alpha_0\subset T\cap st(\kappa_0)$ and $\alpha_1\subset T\cap st(\kappa_1)$ such that $\alpha_0$ and $\alpha_1$ respectively compress to opposite sides of $T$.  Note that by Haken's normal surface algorithm it is decidable if these subsurfaces of $T$ compress to one side or the other.  Indeed, one can pass to a further subdivision $\Delta_4$ such that if such a subsurface compresses to one side, then there exists an essential compressing disc normal with respect to $\Delta_4$ which is a fundamental solution to the appropriate normal surface equations.

By construction each $S_i$ is $16\delta_4$-locally incompressible and of $\Delta_3$ weight $\le K(C+1)$.  Since weakly reducible implies reducible \cite{CaGo} in non Haken 3-manifolds, it suffices to apply the following Lemma \ref{reducible weeding}, to weed out the reducible $S_i$'s.  Simply remove those $S_i$'s lying  in the same component of $\mG$ as an obviously weakly reducible splitting.  

Once reducible splittings are eliminated from the list, duplications are eliminated by applying Lemma \ref{duplicate weeding}.  Again $S_i$ and $S_j$ are isotopic if and only if they are in the same component of $\mG$.\qed

\begin{lemma} \label{reducible weeding}  i)  If $T$ is obviously weakly reducible with respect to $\Delta_2$, then it is weakly reducible. 

ii)  If T is transverse to $\Delta_2$ and $\delta_3$-bi-compressible, then $T$ is obviously weakly reducible with respect to $\Delta_2$.   

iii)  If $T$ is $16\delta_4$-locally incompressible and weight$_{\Delta_2}(T)\le L$, then $T$ is weakly reducible if and only if it lies in the same component of $\mG$ as a  vertex that is obviously weakly reducible with respect to $\Delta_2$.  Here $L$ is the maximal $\Delta_3$ weight of all the $S_i$'s.
\end{lemma}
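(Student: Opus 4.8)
The plan is to prove the three parts in order, with (i) and (ii) serving as short inputs to (iii). For (i) I would rerun the argument of Lemma \ref{local relations}, using the hypothesis that $\st(\kappa)$ and $\st^2(\kappa)$ are $3$-balls in place of the metric smallness used there. If $\st(\kappa_0)\cap\st(\kappa_1)=\emptyset$ then $\alpha_0$ and $\alpha_1$ are disjoint and bound compressing discs to opposite sides of $T$, so $T$ is weakly reducible by definition. Otherwise every closed simplex containing $\kappa_1$ meets $\st(\kappa_0)$ and hence lies in $E:=\st^2(\kappa_0)$, so $\alpha_0,\alpha_1\subset E$, a $3$-ball; an innermost-circle argument on the sphere $\partial E$, using irreducibility of $N$ and that $\alpha_0$ is essential in $T$, produces a circle $\alpha\subset T\cap\partial E$ essential in $T$ and bounding a disc to one side of $T$, and $\alpha$ together with whichever of $\alpha_0,\alpha_1$ compresses to the opposite side (both lie in $\inte E$, hence disjoint from $\alpha$) is a weak reduction. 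Part (ii) is immediate: a $\delta_3$-bi-compressing pair $\beta_0,\beta_1\subset T$ consists of curves of diameter $<\delta_3$, so by the choice of $\delta_3$ each $\beta_i$ lies in some $\st(\kappa_i)$ with $\kappa_i\in\Delta_2$, which is exactly the assertion that $T$ is obviously weakly reducible with respect to $\Delta_2$.

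For the ``if'' direction of (iii), the point is that every edge of $\mG$ is realized by an ambient isotopy of $N$. The moves of types 0), 1), 2a), 2b) are isotopies by the definition of a generic $\Delta$-isotopy, and a pinch is an isotopy because $N$ is irreducible: the $2$-sphere tubed onto a surface bounds a $3$-ball which, since a Heegaard surface lies in no ball, is disjoint from the surface, and tubing onto a ball-bounding sphere disjoint from it returns an isotopic surface. Inducting along a path in $\mG$, using again that a Heegaard surface is never contained in a ball, every vertex along the path is isotopic to $T$; so if $T$ lies in the component of an obviously weakly reducible vertex $T'$, then $T$ is isotopic to $T'$, and since $T'$ is weakly reducible by (i) and weak reducibility is invariant under isotopy of the splitting, $T$ is weakly reducible.

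For the ``only if'' direction, suppose $T$ is weakly reducible; since $N$ is non-Haken, $T$ is reducible \cite{CaGo}. After normally isotoping $T$ to a vertex of $\mG$ (which its hypotheses permit; in the intended application $T=S_i$ already is one), I would apply the Thick Isotopy Lemma \ref{thick isotopy}(ii) -- legitimate because $T$ is $16\delta_4$-locally incompressible, which is the $8\delta$-local incompressibility hypothesis for $\delta=2\delta_4$, and $2\delta_4$ is below the injectivity-radius threshold -- to get a $C$-isotopy $F$ from $T$ to a surface $T_1$ with every slice $2\delta_4$-locally incompressible and $T_1$ being $16\delta_4$-bi-compressible, arranging (as we may) that $T_1$ is crudely normal with respect to $\Delta_3$ of weight at most $L$. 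Next, Lemma \ref{coarea} applied to $F$ gives a generic $K(C+1)$-$\Delta_3$-isotopy from $T$ to $T_1$ with tracks of diameter $<\delta_4/2$, so each transverse slice remains $\delta_4$-locally incompressible; then Lemma \ref{crude isotopy} replaces it by a generic pinched crudely normal $K(C+1)$-$\Delta_3$-isotopy $H$ from $T$ to $T_1$. Reading $H$ off as a walk in $\mG$ -- its transverse slices vary by normal isotopy, hence stay at a single vertex, and each non-transverse or pinch time is an edge -- shows $T$ and $T_1$ lie in the same component of $\mG$. Finally $16\delta_4<\delta_3$, so $T_1$ is $\delta_3$-bi-compressible, hence obviously weakly reducible with respect to $\Delta_2$ by (ii), completing the equivalence.

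The step I expect to be the main obstacle is keeping the numerical parameters consistent: one must choose $\delta_4$ small enough -- relative to $\delta_3$, to the injectivity-radius lower bound $\delta_1$, and to the mesh of $\Delta_3$ -- that the Thick Isotopy Lemma, Lemma \ref{coarea}, and Lemma \ref{crude isotopy} can be applied in succession to one and the same isotopy without the local-incompressibility scale ever dropping below the diameters of the stars $\st^2(\kappa)$, $\kappa\in\Delta_3$, while simultaneously keeping every interpolating surface of weight at most the bound $K(C+1)$ that defines the vertices of $\mG$; one must also verify that the surface $T_1$ delivered by the Thick Isotopy Lemma can be taken to be such a bounded-weight crudely normal vertex, and that $T$ itself, having bounded $\Delta_2$-weight and being $16\delta_4$-locally incompressible, is normally isotopic to a vertex of $\mG$. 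A secondary point needing care is the innermost-circle argument in part (i).
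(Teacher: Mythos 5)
Your proposal is correct and follows essentially the same route as the paper's proof: for (i) a third curve on the boundary of a $3$-ball of the form $\st^2(\cdot)$ splits the two compressions (the paper uses $\st^2(\lambda)$ for a common face $\lambda$ of the two stars rather than your $\st^2(\kappa_0)$, but either works given the construction of $\Delta_2$); (ii) is the same one-line observation; and (iii) chains the Thick Isotopy Lemma, Lemma \ref{coarea}, and Lemma \ref{crude isotopy} exactly as the paper does, with the ``if'' direction (which the paper leaves implicit) correctly filled in by noting that pinches and elementary moves are ambient isotopies in the irreducible manifold $N$. The one inaccuracy worth flagging is your parenthetical that the Thick Isotopy Lemma's endpoint can be ``arranged'' to be crudely normal of weight at most $L$: that is not needed and not what happens — transversality and bounded weight for the terminal surface are supplied by the first part of Lemma \ref{coarea}, and crude normality only after Lemma \ref{crude isotopy}, at which point the weight bound is $K(C+1)$, not $L$.
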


\begin{proof}  i)  This is immediate if $\inte(st(\kappa_0))\cap\inte(st(\kappa_1))=\emptyset$.  Otherwise these stars have a simplex $\lambda$ in common and hence $st(\kappa_0)\cup st(\kappa_1) \subset st^2(\lambda)$.  By construction $st^2(\lambda)$ is a 3-ball, so   some component $\alpha_3$ of $\partial (st^2(\lambda))\cap T $ is essential in $T$ (else $N$ is  the 3-sphere) and compresses to one side or the other.  Thus $\alpha_3$ and one of $\alpha_0$ or $\alpha_1$ provide a weak reduction.

ii)  Since $T $ is $\delta_3$-bi-compressible there exist essential compressing curves $\alpha_0$, $\alpha_1\subset T$ that compress to opposite sides and have diameter at most $\delta_3$ and hence lie in stars of simplices of $\Delta_2$. 

iii) If $T$ is weakly reducible, then by  Lemma \ref{thick isotopy} there exists a $C$-isotopy from $T=T_0$ to a $16\delta_4$-bi-compressible  surface $T''_1$ such that  each interpolating surface is $2\delta_4$-locally incompressible.  

By applying Lemma \ref{coarea}, with $\Delta=\Delta_3$ and $\epsilon=\delta_4/2$  we can assume that there exists a generic  $K(C+1)-\Delta_3$-isotopy $G$ from $T_0$ to $T'_1$, where $T'_1$ is transverse to $\Delta_3$, is $17\delta_4$-bi-compressible and each interpolating surface is $\delta_4$-locally incompressible.  Finally apply Lemma \ref{crude isotopy} to find a generic pinched crudely normal $K(C+1)-\Delta_3$-isotopy  $H$ from $T_0$ to $T_1$ where $T_1$ is crudely normal of weight $\le K(C+1)$ and $19\delta_4$-bi-compressible.  Since $19\delta_4<\delta_3$ it follows $T_1$ is obviously weakly reducible with respect to $\Delta_2$ and that  $H$ determines a path within $\mG$ from $T_0$ to $T_1$.  \end{proof}

\begin{lemma} \label{duplicate weeding}  If $S_i$ and $S_j$ are strongly irreducible then they are isotopic if and only if they are in the same component of $\mG$.\end{lemma}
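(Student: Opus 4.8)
<br>

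The key idea is that, as in Lemma~\ref{reducible weeding}~(iii), one direction is essentially tautological and the other direction is an application of the Thick Isotopy Lemma followed by Lemmas~\ref{coarea} and~\ref{crude isotopy} to transport a smooth isotopy into a path in $\mG$. So the plan is as follows. If $S_i$ and $S_j$ lie in the same component of $\mG$, then there is a generic pinched crudely almost normal $K(C+1)$-$\Delta_3$-isotopy between them; since a pinch changes a surface by tubing on a $2$-sphere in an irreducible manifold, each pinch is realized by an ambient isotopy, and each elementary move of type $0)$, $1)$, $2a)$, $2b)$ is realized by an ambient isotopy of $N$. Composing, $S_i$ is isotopic to $S_j$. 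For the converse, suppose $S_i$ is isotopic to $S_j$. Both are strongly irreducible genus-$g$ Heegaard surfaces and, by construction in the proof of Theorem~\ref{main}, each is $16\delta_4$-locally incompressible with $\Delta_3$-weight $\le L \le K(C+1)$. Since $8\delta := 8\cdot(2\delta_4)$ satisfies $8\delta < 16\delta_4 \le$ the relevant local incompressibility bound and $2\delta_4 < \delta_1/16$, Lemma~\ref{thick isotopy}~(i) applies with $\delta = 2\delta_4$: there is a computable $C$ (which we have folded into the constant $C$ above) and a $C$-isotopy $F$ from $S_i$ to $S_j$ through surfaces that are $2\delta_4$-locally incompressible.

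Next I would invoke Lemma~\ref{coarea} with $\Delta = \Delta_3$, $L$ the common weight bound, and $\epsilon = \delta_4/2$: this replaces $F$ by a generic $K(C+1)$-$\Delta_3$-isotopy $G$ from $S_i$ to $S_j$ whose interpolating surfaces remain $\delta_4$-locally incompressible (the perturbation is smaller than $\delta_4$, so a surface that was $2\delta_4$-locally incompressible stays $\delta_4$-locally incompressible — this is the standard slack built into the numerology of the proof of Theorem~\ref{main}). Note that $G$ here carries no pinches, so it is in particular a generic pinched $K(C+1)$-$\Delta_3$-isotopy. Then I would apply Lemma~\ref{crude isotopy} — whose hypotheses ($N$ closed irreducible with injectivity radius $> \delta_1 > \delta_3$, $\delta_4 < \delta_3/16$, and $\diam(\kappa) \le \delta_4$ with $st(\kappa), st^2(\kappa)$ $3$-balls for $\kappa \in \Delta_3$) are exactly those arranged in the proof of Theorem~\ref{main} — to produce a generic pinched crudely normal $K(C+1)$-$\Delta_3$-isotopy $H$ from $S_i$ to $S_j$. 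Since every vertex surface appearing along $H$ is crudely almost normal of weight $\le K(C+1)$, and consecutive surfaces along $H$ differ by a pinch or by an elementary move of type $0),1),2a),2b)$ or their inverses, $H$ traces out an edge-path in $\mG$ from the vertex $[S_i]$ to the vertex $[S_j]$. Hence $S_i$ and $S_j$ lie in the same component of $\mG$, completing the proof.

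The only genuinely delicate point is the bookkeeping of the diameter and weight constants: one must check that the $8\delta$-local incompressibility hypothesis of Lemma~\ref{thick isotopy}~(i) is met by the $16\delta_4$-local incompressibility of the $S_i$'s, that the perturbation scale $\delta_4/2$ in Lemma~\ref{coarea} leaves enough room to preserve $\delta_4$-local incompressibility, and that $K(C+1)$ is simultaneously an upper bound valid as input to Lemma~\ref{coarea} and as the weight ceiling defining the vertices of $\mG$. All of this has already been set up in the proof of Theorem~\ref{main}, so the argument is really just a matter of citing the three lemmas in the right order; I expect the main obstacle, such as it is, to be making the constant-chasing airtight rather than any new geometric input. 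One should also remark that strong irreducibility is used only to guarantee that $S_i, S_j$ are not weakly reducible, so that the reducible-weeding step of Theorem~\ref{main} has already removed any ambiguity and the isotopy hypothesis of Lemma~\ref{thick isotopy}~(i) is exactly what is available.
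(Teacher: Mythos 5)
Your proposal is correct and follows essentially the same route as the paper: the easy direction is immediate, and the hard direction chains Lemma~\ref{thick isotopy}~(i), Lemma~\ref{coarea}, and Lemma~\ref{crude isotopy} in exactly the order the paper uses to convert a smooth $C$-isotopy into a path in $\mG$. Your added remarks on the constant-chasing and on where strong irreducibility enters are sound elaborations of what the paper leaves implicit.
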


\begin{proof}  If they are in the same component of $\mG$, then it is immediate that they are isotopic.  Conversely by Lemma \ref{thick isotopy} there exists a $C$-isotopy $F$ from $S_i$ to $S_j$ such that each interpolating surface is $2\delta_4$-locally incompressible.  By Lemma \ref{coarea} there exists a generic  $K(C+1)-\Delta_3$-isotopy from $S_i$ to $S_j$ such that each interpolating surface is $\delta_4$-locally incompressible.  By Lemma \ref{crude isotopy} there exists a generic pinched crudely normal $K(C+1)-\Delta_3$-isotopy $H$ from $S_i$ to $S_j$.  This $H$ determines a path in $\mG$ from $S_i$ to $S_j$.\end{proof}

\section{Appendix: Proof of Theorem \ref{minmax}}

The proof of the Min-Max Theorem \ref{minmax} is similar to that of the one-parameter case handled in \cite{CD}.  The following two things must be established:
\begin{enumerate}
\item  Starting from any given minimizing sequence $\{\Sigma_t\}^i$, one can "pull-tight" to produce a new minimizing sequence $\{\Gamma_t\}^i$ so that any min-max sequence obtained from $\{\Gamma_t\}^i$ converges to a stationary varifold.
\item At least one min-max sequence obtained from $\{\Gamma_t\}^i$ is \emph{almost minimizing (a.m.) in sufficiently small annuli}.  (see Definition 3.2 in \cite{CD} for the relevant definition).
\end{enumerate}

Given (1) and (2) it follows from \cite{CD} that the min-max limit is regular.  The genus bound \eqref{genusbound} then follows from \cite{K} since only the almost minimizing property is used there.   Thus (1) and (2) together imply the Min-Max Theorem \ref{minmax}.  Item (1) follows from straightforward modifications of \cite{CD}.   We will give a proof of (2) since the combinatorial argument is more involved than the one-parameter case handled in \cite{CD}.
\\
\\
\emph{Proof of (2).}
The idea of the proof is that if \emph{no} min-max sequence were almost minimizing, we can glue together several isotopies to produce a competitor sweepout with all areas strictly below $W(\Pi,M)$ and thus violate the definition of $W(\Pi,M)$.  To accomplish this, we will need to find many disjoint annuli on which to pull down the slices, which requires a combinatorial argument due to Almgren-Pitts \cite{P}.

Given $x\in M$, for $r,s>0$, denote by $An(x,r,s)$ the open annulus centered around $x$ with outer radius $s$ and inner radius $r$.  Given sequences $\{r_i\}_{i=1}^k$ and $\{s_i\}_{i=1}^k$ with $s_i<r_{i+1}$, consider the family of annuli $\{An(x,r_i,s_i)\}_{i=1}^{k}$.  Such a family is called \emph{admissible} if $r_{i+1}>2s_i$ for each $i$.   A family of admissible annuli containing precisely $L$ annuli is called \emph{$L$-admissible}.  Given a surface $\Sigma\subset M$ and  an $L$-admissible family $\mathcal{F}$, we will say that $\Sigma$ is $\epsilon$-almost minimizing in an $\mathcal{F}$ is it $\epsilon$-almost minimizing in at least one of the annuli comprising $\mathcal{F}$.

\begin{lemma}\label{almostmin}
There exists an integer $L=L(n)$ and a min-max sequence $\Sigma_j$ converging to a stationary varifold such that  $\Sigma_j$ is almost minimizing in every $L$-admissible family of annuli. 
\end{lemma}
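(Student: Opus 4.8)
\noindent\emph{Proof strategy.}  The plan is to fix $L=L(n)$ — a number depending only on $n$, supplied by the Almgren--Pitts combinatorial selection lemma discussed below — and to argue by contradiction: if no min-max sequence extracted from the (already pulled-tight) minimizing sequence is almost minimizing in every $L$-admissible family, then one can superimpose many small area-decreasing isotopies, continuously in the parameter $t\in I^n$, to manufacture a competitor sweepout lying in $\Pi$ all of whose slices have area strictly below $W(\Pi,M)$, contradicting the definition of the width.  Since we may first pull tight as in item (1) of the Appendix, every min-max sequence subconverges to a stationary varifold, so the sequence produced at the end automatically has the required convergence; only the almost-minimizing property is in question.

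The first step is a reduction to a quantitative, uniform failure statement, following \S5 of \cite{CD} but carried out on $I^n$ in place of $I^1$.  Working with a fixed countable base of $L$-admissible families (the full space of such families is not compact, so one cannot argue with it directly), a diagonal argument shows that if the lemma fails then there are $\epsilon>0$ and $i_0$ so that for every $i\ge i_0$ and every $t\in I^n$ with $\mathcal{H}^2(\Sigma_t^i)\ge W(\Pi,M)-\epsilon$, the slice $\Sigma_t^i$ is \emph{not} $\epsilon$-almost minimizing in some $L$-admissible family $\mathcal{F}_t^i$; that is, for each of the $L$ annuli $A$ comprising $\mathcal{F}_t^i$ there is an isotopy $\{\phi^A_s\}$ with $\phi^A_s\in\Diff_0(M)$, supported in $A$, satisfying $\mathcal{H}^2(\phi^A_s(\Sigma_t^i))\le\mathcal{H}^2(\Sigma_t^i)+\epsilon/8$ for all $s$ and $\mathcal{H}^2(\phi^A_1(\Sigma_t^i))\le\mathcal{H}^2(\Sigma_t^i)-\epsilon$.

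Next, fix such a large $i$.  Using that $t\mapsto\Sigma_t^i$ is continuous in the Hausdorff topology and that areas vary continuously, each improving isotopy above lowers the area of every slice in a neighborhood of $t$ by at least $\epsilon/2$ while never raising it by more than $\epsilon/4$.  The set $K=\{t:\mathcal{H}^2(\Sigma_t^i)\ge W(\Pi,M)-\epsilon/4\}$ is compact and, since $\sup_{\partial I^n}\mathcal{H}^2(\Sigma_t^i)<W(\Pi,M)-\epsilon$, disjoint from $\partial I^n$; I would cover $K$ by finitely many such improvement neighborhoods, each carrying an $L$-admissible family together with an improving isotopy for each of its annuli, and then subdivide $I^n$ into a cubical complex fine enough to refine this cover and to have every cube meeting $K$ contained in one of the neighborhoods, so that a single $L$-admissible family $\mathcal{F}_\alpha$ and its accompanying isotopies are attached to each such cube $\alpha$.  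The crux is then the $n$-parameter combinatorial selection: one wants to pick, for every cube $\alpha$ of the subdivision, one annulus $A_\alpha\in\mathcal{F}_\alpha$ so that $A_\alpha\cap A_\beta=\emptyset$ whenever the cubes $\alpha,\beta$ have nonempty intersection.

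Granting such a selection, the isotopies $\{\phi^{A_\alpha}_s\}$ attached to cubes in any clique of the adjacency graph have disjoint supports, hence commute, and using a partition of unity subordinate to the open stars of the cubes one interpolates them into a single $\psi\in C^\infty(I^n\times M,M)$ with $\psi(t,\cdot)\in\Diff_0(M)$, $\psi(t,\cdot)=\id$ for $t\notin K$ (in particular on $\partial I^n$), and $\mathcal{H}^2(\psi(t,\Sigma_t^i))\le\mathcal{H}^2(\Sigma_t^i)-\epsilon/4$ for $t\in K$; then $\{\psi(t,\Sigma_t^i)\}_{t\in I^n}\in\Pi$ has $\sup_t\mathcal{H}^2\le W(\Pi,M)-\epsilon/8$, the desired contradiction.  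The existence of the selection is precisely the combinatorial lemma of Almgren--Pitts \cite{P}, adapted to the cube complex $I^n$, with $L=L(n)$ growing with $n$ because one must furnish pairwise-disjoint annuli simultaneously for all of the at most $3^n$ cubes surrounding a given cube (there being $2^n$ top cells at a vertex), exploiting the scale separation $r_{i+1}>2s_i$ built into admissibility.  I expect this combinatorial step, and its interpolation, to be the main obstacle: in the one-parameter case of \cite{CD} the cube complex is an interval and the gluing is essentially one-dimensional, whereas here one must control the combinatorics of the full $n$-cube; this is the genuinely new ingredient, and it is where $L=L(n)$ originates.  The remaining points — making the diagonal extraction rigorous over a countable base of families, and checking that the interpolated family satisfies all four sweepout axioms (especially near $\partial K$ and near the $1$-complex slices over $B$) and lies in $\Pi$ — follow the templates in \cite{CD} and \cite{P} and should be routine.
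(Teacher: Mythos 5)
Your proposal follows the same strategy as the paper's proof: argue by contradiction, cover the set of near-maximal slices by neighborhoods each carrying an $L$-admissible family, invoke the Almgren--Pitts combinatorial selection to assign pairwise-disjoint annuli to nearby parameters, and glue the area-decreasing isotopies with a partition of unity to undercut the width. You also correctly identify the $n$-parameter combinatorial selection as the genuinely new ingredient relative to the one-parameter case in \cite{CD}.

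One detail you should tighten: you phrase the selection as ``for every (top-dimensional) cube $\alpha$ pick $A_\alpha$ so that $A_\alpha\cap A_\beta=\emptyset$ whenever $\alpha\cap\beta\neq\emptyset$,'' and then glue with a partition of unity subordinate to open stars of these cubes. This is not quite enough to ensure that at every parameter $t$ in the near-maximal set some cutoff function equals $1$, which is what guarantees a \emph{definite} area drop at every such $t$. The paper instead works with the Pitts cell complex $I(n,j)$ and \emph{thickened cells} $T(\alpha)$ of all dimensions $0\le p\le n$; the cover by thickened cells (rather than open stars of top cubes) is what makes property (3) — some $\phi_i(x)=1$ for each $x\in S$ — hold, and Pitts' Proposition 4.9 is then invoked in the form ``$F(\sigma)\cap F(\tau)=\emptyset$ whenever $\sigma,\tau$ are faces of a common cell,'' with the explicit bound $L=(3^n)^{3^n}$. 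Your heuristic that $L$ comes from the $3^n$ neighboring cells is pointing in the right direction but understates the count, since the disjointness must be arranged across all face relations in the inductive construction, not merely across adjacent top cells. These are adjustments to your discretization, not to the underlying idea.
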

\begin{proof}
Much of the proof of Lemma \ref{almostmin} follows \cite{CD}, and thus we focus only on the parts that are different.  The proof is by 
contradiction.  If it failed, we would obtain a set $S\subset [0,1]^n$ of "large slices" and an open covering $\{\mathcal{O}_i\}_{i=1}^M$ of $S$ so 
that to each open set $\mathcal{O}_i$ is associated an $L$-admissible family $An_{\mathcal{O}_i}$.  It is enough to prove that there is another 
cover $\{\mathcal{U}_i\}$ of $S$ refining $\{\mathcal{O}_i\}_{i=1}^M$ (in the sense that each element in the collection $\{\mathcal{U}_i\}$ is 
contained in at least one of the $\mathcal{O}_i$) such that:
\begin{enumerate}
\item To each $\mathcal{U}_i$ we can fix one annulus $An_i$ which is one of the annuli comprising $An_{\mathcal{O}_j}$ for some $\mathcal{O}_j$ containing $\mathcal{U}_i$.
\item Each $\mathcal{U}_i$ intersects at most $d=d(n)$ other elements in the collection $\{\mathcal{U}_i\}$
\item To each $\mathcal{U}_i$ we can associate a compactly supported $C^\infty$ function $\phi_i:\mathcal{U}_i\rightarrow [0,1]$.  Moreover, for any $x\in S$ we have that $\phi_i(x)=1$ for some $i$.
\item If $\phi_i(x)$ and $\phi_j(x)$ are both nonzero for some $i$ and $j$, then $An_i\cap An_j=\emptyset$
\end{enumerate}

To achieve this, it will help to have some notation (following Pitts). For each $j\in\mathbb{N}$, denote by $I(1,j)$ be the cell complex on the interval $I^1=[0,1]$ whose 1-cells are $[0,3^{-j}], [3^{-j},2\times 3^{-j}],...[1-3^{-j},1]$ and whose 0-cells are $[0],[3^{-j}],...[1]$.  We are considering the $n$-dimensional cell complex $I^n$, which we can write as a tensor product: $$I(n,j)=I(1,j)\otimes I(1,j)\otimes...\otimes I(1,j)\mbox{       ($n$ times).} $$ \noindent For $0\leq p\leq n$ we define a $p$-cell as an element $\alpha=\alpha_1\otimes\alpha_2...\otimes\alpha_n$ where $\sum_{i=1}^n \dim(\alpha_i)=p$.  The sets $\mathcal{U}_i$ will be "thickened" $p$-cells for suitably large $j$.  Precisely, for each $p$ cell $\alpha\in I(n,j)$ expressed as $\alpha=\alpha_1\otimes\alpha_2...\otimes\alpha_n$, the \emph{thickening} of $\alpha$ denoted $T(\alpha)$ is the open set obtained by replacing each $\alpha_i$ in its tensor expansion that is a 0-cell $[x]$ by $[x-3^{-j-1},x+3^{-j-1}]$.  Note that for an $n$-cell $\beta$, $T(\beta)=\beta$.  Denote the collection of thickened cells by $T(I(n,j))$.   For some $j_0$ large enough we have that any element in $T(I(n,j_0))$ has diameter smaller than the Lebesgue number of the covering $\{\mathcal{O}_i\}$ and thus each element of  $T(I(n,j_0))$ is contained in some $\mathcal{O}_i$.  We set our refinement $\mathcal{U}_i$ to then be the collection $T(I(n,j_0))$.
Because each thickened cell $T(\alpha)$ only intersects those faces $\beta$ such that $\alpha$ and $\beta$ are faces of a common cell $\gamma$, we easily obtain (2).  For  (1), (3), and (4), we can now invoke the following combinatorial lemma of Pitts:

\begin{lemma}\label{pittscombinatorial}(Proposition 4.9 in \cite{P})
For each $\sigma\in I(n,j)$, let $A(\sigma)$ be an $({3^n})^{3^n}$-admissible family of annuli.  Then there exists a function $F$ which assigns to each $\sigma\in I(n,j)$ an element $F(\sigma)\in A(\sigma)$ such that $A(\sigma)$ and $A(\tau)$ are disjoint whenever $\sigma$ and $\tau$ are faces (possibly of different dimensions) of some common cell $\gamma\in I(n,j)$.
\end{lemma}

To apply Lemma \ref{pittscombinatorial} to our setting, set $L=({3^n})^{3^n}$ and for each cell $\alpha\in I(n,j_0)$ we choose some $L$-admissible family $A(\alpha)$ from one of the $\mathcal{O}_i$ containing it (it does not matter which).  This then determines a map $A$ and the lemma applies to give an annulus associated to each cell $\alpha$ and thus to each to thickened cell $T(\alpha)$ satisfying (4).  Item (3) then follows easily.
\end{proof}

While Lemma \ref{almostmin} gives an almost minimizing property in many annuli about each point, we need the property for all annuli sufficiently small: 

\begin{lemma}\label{allsmall}
Some subsequence of the min-max sequence produced by Lemma \ref{almostmin} is almost minimizing in sufficiently small annuli.  
\end{lemma}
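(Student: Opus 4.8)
The plan is to deduce Lemma~\ref{allsmall} from Lemma~\ref{almostmin} by a pigeonhole argument on scales, exactly as in the one-parameter case \cite{CD} but keeping track of the combinatorial constant $L=L(n)$.  First I would fix a point $x\in M$ and recall what Lemma~\ref{almostmin} gives us: the min-max sequence $\Sigma_j$ is almost minimizing in \emph{some} annulus of every $L$-admissible family centered at $x$.  The key observation is that a single nested chain of $L$ concentric annuli $An(x,r_1,s_1),\dots,An(x,r_L,s_L)$ with $r_{i+1}>2s_i$ and all radii below a fixed small $r_0$ forms an $L$-admissible family, so for each $j$ the surface $\Sigma_j$ is $1/j$-almost minimizing in at least one of these $L$ annuli; by pigeonhole, after passing to a subsequence, it is the \emph{same} index $i(x)$ that works for infinitely many $j$.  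That isolates one fixed annulus $An(x,r_{i(x)},s_{i(x)})$ in which the subsequence is almost minimizing.

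The next step is to upgrade ``almost minimizing in one fixed annulus'' to ``almost minimizing in all sufficiently small annuli about $x$.''  Here I would invoke the standard fact (see \cite{CD}, carried over from Pitts \cite{P}) that if a sequence is almost minimizing in an annulus $An(x,r,s)$ then it is almost minimizing in every annulus $An(x,r',s')$ with $r'\le r$ (monotonicity of the almost-minimizing property under shrinking the outer radius and/or enlarging the inner radius, within the fixed annulus).  Thus the fixed inner radius $r_{i(x)}$ becomes a threshold: the subsequence is almost minimizing in every annulus about $x$ of outer radius at most $r_{i(x)}$.  One then runs a diagonal argument over a countable dense set of points $x\in M$ (or, as in \cite{CD}, over all of $M$ using that the radius threshold can be taken lower-semicontinuous in $x$ or simply bounded below on compact sets after a further subsequence), producing a single subsequence and a function $x\mapsto r(x)>0$ such that $\Sigma_j$ is almost minimizing in $An(x,s,r)$ whenever $r\le r(x)$.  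This is precisely the definition of ``almost minimizing in sufficiently small annuli,'' so the lemma follows.

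I expect the main obstacle to be purely bookkeeping rather than conceptual: one must make sure that the subsequence extracted does not depend on $x$ in a way that breaks the diagonalization, and that the constant $L=L(n)$ from Lemma~\ref{pittscombinatorial} is large enough that a genuine nested chain of $L$ small annuli at a common scale can always be found below any prescribed $r_0$ (it can, since admissibility only requires the geometric-type spacing $r_{i+1}>2s_i$, which is arranged by taking radii $r_i,s_i$ decaying geometrically).  Since $n$ is fixed throughout, $L(n)$ is a fixed integer and this causes no trouble.  The only real content beyond \cite{CD} is that the pigeonhole is over $L(n)$ annuli instead of a uniformly bounded number in the one-parameter case, and this changes nothing in the argument.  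Once Lemma~\ref{allsmall} is established, combined with the pull-tight construction of item~(1) it yields a min-max sequence that is stationary and almost minimizing in small annuli, which by the regularity theory of \cite{CD} and the genus bounds of \cite{K} completes the proof of Theorem~\ref{minmax}.
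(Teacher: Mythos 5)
The first half of your argument is fine: a single nested chain of $L$ small annuli with geometrically spaced radii is $L$-admissible, and the pigeonhole extraction of a subsequence almost minimizing in one \emph{fixed} annulus $An(x,r_{i(x)},s_{i(x)})$ is exactly how the paper begins.  The gap comes in the next step.  You invoke ``monotonicity of the almost-minimizing property under shrinking the outer radius and/or enlarging the inner radius, within the fixed annulus,'' and then conclude that being a.m.\ in $An(x,r_{i(x)},s_{i(x)})$ yields a.m.\ in every annulus about $x$ of outer radius at most $r_{i(x)}$.  That conclusion does not follow.  The correct monotonicity is only restriction to a \emph{subannulus}: a.m.\ in $An(x,r,s)$ implies a.m.\ in $An(x,r',s')$ when $r\le r'<s'\le s$, because an isotopy supported in the smaller region is supported in the larger.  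An annulus of outer radius $\le r_{i(x)}$ sits entirely inside the ball $B(x,r_{i(x)})$ and is \emph{disjoint} from $An(x,r_{i(x)},s_{i(x)})$; restriction tells you nothing about it.  In fact being a.m.\ in one fixed annulus about $x$ carries no information whatsoever about annuli at smaller scales about $x$, which is precisely why this lemma requires work.

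The paper's proof of its claim $(1^*)$ handles this by an iterated shrinking argument rather than a single pigeonhole: starting from an $L$-admissible family $\mathcal{F}$, find the outermost annulus $A(r,s)$ in which a subsequence is a.m.; then replace $\mathcal{F}$ by a new admissible family $\mathcal{F}'$ in which the annuli interior to $A(r,s)$ have radii halved and $A(r,s)$ itself is replaced by $A(r/2,s)$; again pick the outermost a.m.\ annulus, observing that by construction it can lie no further out than $A(r/2,s)$.  Iterating this pushes the outermost a.m.\ annulus inward, and a diagonal argument then yields a subsequence a.m.\ in \emph{all} sufficiently small annuli about $p$.  This is the idea your proposal is missing.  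For the global step, your suggestion of diagonalizing over a countable dense set and claiming the threshold $r(x)$ is lower-semicontinuous or bounded below is also not justified; the paper instead runs a Vitali-type greedy covering argument, choosing successive points $p_1,p_2,\dots$ with $r_m(p_{k+1})>\tfrac12\sup\{r_m(q):q\notin\cup B_{r_m(p_i)}(p_i)\}$ and passing to nested subsequences; this terminates after finitely many steps because $r_m(p_i)\not\to 0$, by $(1^*)$.  You should replace the erroneous monotonicity step with the paper's iterative construction, and replace the dense-set diagonalization with the Vitali cover.
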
\noindent
\begin{proof}  We first show the following claim:\\ \\
(1*) Given a point $p\in M$, and a min-max sequence $\Sigma_j$ that is a.m. in all $L$-admissible familes of annuli about $p$, one can find a positive number $r(p)$ and a subsequence of $\Sigma_j$ that is a.m.~ in sufficiently small annuli of outer radius at most $r(p)$ about $p$.\\ \\
To prove (1*), first fix an $L$-admissible family $\mathcal{F}$ of annuli about $p$. By the pigeonhole priniciple $\Sigma_j$ is a.m.~ in one of the annuli comprising $\mathcal{F}$ for infinitely many $j$.  Fix the outermost such annulus $A(r,s)$ and a pass to the subsequence (not relabeled) of the $\Sigma_j$ that are a.m.~ in $A(r,s)$.  Now consider the family $\mathcal{F}'$ consisting of the annuli in $\mathcal{F}$ exterior to $A(r,s)$, the annulus $A(r/2,s)$ as well as the annuli in $\mathcal{F}$ interior to $A(r,s)$ whose inner and outer radii are multiplied by $1/2$ (such a family is still admissible).  By the a.m.~ property infinitely many of the $\Sigma_j$ must be a.m.~ in one of the annuli in the family $\mathcal{F}'$.  Again choose the outermost such annulus (which can be no further out than $A(r/2,s)$ by construction) and pass to a subsequence a.m. in this new annulus.  Iterating this procedure and a diagonal argument gives a min-max sequence a.m.~ in $\emph{all}$ annuli sufficiently small about $p$, proving (1*).

To prove Lemma \ref{allsmall}, we combine (1*) with a Vitali-type covering argument. For each $p\in M$ and given a min-max sequence $\Sigma_j$, let $m(p)$ denote the supremum of positive numbers $\eta$ so that some subsequence of $\Sigma_j$ is a.m.~ in annuli with outer radius $\eta$.  By (1*), $m(p)>0$.   Set $r_m(p):=m(p)/2$.  Choose some $p_1\in M$ with $r_m(p_1)>\frac{1}{2}\sup_{q\in M} r_m(q)$.  Then pass to a subsequence of $\Sigma_j$ (not relabeled) that is a.m.~ in annuli with outer radius at most $r_m(p_1)$.  Choose then $p_2\in M\setminus B_{r_m(p_1)}(p_1)$ so that 
\begin{align}
r_m(p_2)>\frac{1}{2}\sup_{q\in M\setminus B_{r_m(p_1)}(p_1)} r_m(q)
\end{align} 
and pass to a further subsequence of $\Sigma_j$ a.m.~ in annuli about $p_2$ and outer radius at most $r_m(p_2)$.  Iterating this procedure gives rise in finitely many steps to a cover of the entire manifold with the desired properties since by the maximality of the construction and (1*) it cannot happen that $r_m(p_i)\rightarrow 0$.  \end{proof}

\enddocument